\documentclass{amsart}

\vfuzz2pt % Don't report over-full v-boxes if over-edge is small
\hfuzz2pt % Don't report over-full h-boxes if over-edge is small

%\title{On the Computation of Tracking Initial Condition}
%\author{Yu-Min Chung and M.S. Jolly\\
%       Mathematics Department,\\
%       Indiana University\\
%       \texttt{yumchung@indiana.edu}} 

\usepackage{latexsym}
\usepackage{graphicx, psfrag  ,  caption, subcaption}
\usepackage{amsfonts}
\usepackage{amsmath}
\usepackage{amsthm}
\usepackage{amssymb}
\usepackage{amscd}
\usepackage{enumerate}
\usepackage{rotating}
\usepackage{algorithm}
\usepackage{algorithmic}
\usepackage{mathptmx}
\usepackage{color} 
\usepackage{float}
\usepackage{multirow}
\usepackage{url}

\newtheorem{thm}{Theorem}[section]

\newtheorem{lem}[thm]{Lemma}
\newtheorem{prop}[thm]{Proposition}

\theoremstyle{definition}
\newtheorem{pdef}[thm]{Definition} 
\numberwithin{equation}{section}

\begin{document}

\title[Computing foliations]
{A unified approach to compute foliations, inertial manifolds, and tracking initial conditions}
\author{Y.-M. Chung}
\address{Department of Mathematics\\
Indiana University\\ Bloomington, IN 47405}

\author{M. S. Jolly$^\dagger$}
\address{$^\dagger$ corresponding author}
%\address{$^1$Department of Mathematics\\
%Indiana University\\ Bloomington, IN 47405}
\email[Y-M. Chung]{yumchung@indiana.edu}
\email[M. S. Jolly]{msjolly@indiana.edu}

\date{\today}
\thanks{This work was supported in part by NSF grant number DMS-1109638.  The authors thank Ricardo Rosa for several stimulating discussions.}

\subjclass[2010]{34C40, 34C45, 37L25}
\keywords{foliations, inertial manifolds, tracking initial condition}

\maketitle

\begin{abstract}
Several algorithms are presented for the accurate computation of the leaves in the foliation
of an ODE near a hyperbolic fixed point.  They are variations of a contraction mapping method  in \cite{rosa} to compute inertial manifolds, which represents a particular leaf in the unstable foliation.   Such a mapping is combined with one
for the leaf in the stable foliation to compute the tracking 
 initial condition for a given solution. The algorithms are demonstrated on the Kuramoto-Sivashinsky equation. 

\end{abstract}

%\tableofcontents

\section*{Introduction}
\label{section introduction}

The Hartman-Grobman Theorem provides a local foliation for an ODE near a hyperbolic point; through each nearby (base) point there is a pair of leaves that define a conjugacy to the linearized flow.  In the classic case where the base point is the hyperbolic point itself, 
one leaf is its unstable manifold, the other its stable manifold.  In that case the leaves are invariant; for a general base point they are not.  
They can, however, be characterized by the exponential growth/decay rates of the differences between solutions that start on them.  If the gap in the spectrum of the linear part sufficiently dominates the Lipschitz constant for the nonlinear part in a large enough neighborhood, and the spectrum is positioned properly, the unstable manifold is an inertial manifold.  Each solution is attracted at an exponential rate to a particular "tracking" solution on the inertial manifold.  We present several algorithms for the accurate computation of the leaves in the foliation and as well as for the tracking 
 initial condition for a given solution. The algorithms are demonstrated on the Kuramoto-Sivashinsky equation, which is an amplitude model of thin film flow (see \cite{AIFKSE} and references therein).

There has been considerable analysis of foliations in the literature.
The finite dimensional case was studied in \cite{foliation1}, followed by
treatments for particular partial differential equations (PDEs)  in \cite{foliation2} and \cite{foliation3}.   The exponential tracking property of inertial manifolds was 
established in \cite{inertial2}.
We consider here the general Banach space setting, as in  \cite{foliation_Bates_Lu_Zeng_1}, \cite{foliation_Bates_Lu_Zeng_2}, and \cite{foliation_chow_lin_lu}, and follow the particular framework in \cite{foliation}.   

The computation of different elements in a foliation have been treated separately
with a variety of approaches.  The survey paper \cite{survey_dodel} discusses a great number 
of methods for classic stable and unstable 2D manifolds.  Those manifolds are global,
but are generally not the graphs of functions, unlike inertial manifolds,
which are usually assumed to be both \cite{inertial1}.   Approximate inertial manifolds
(see e.g. \cite{AIFKSE}) for dissipative PDEs are explicit expressions for the enslavement of the high modes in terms
of the low modes.  To reduce the error in their approximation one must increase the
number of low modes, and hence the dimension of the manifold.  Direct computation
of a global inertial manifold of fixed dimension was carried out in \cite{ComputingInertial2}, 
while accurate evaluation of the enslavement at individual low mode
inputs was achieved in \cite{JRT,ComputingInertial4}.  The first efforts to compute 
tracking initial conditions appear to be in \cite{Roberts1,Roberts2,Roberts3}, in the context of center manifolds.  Those methods involve an expansion of normal forms and iterative procedure to project onto a local basis of the tangent space of the center manifold.

We present here a unified approach to computing all these elements.
It is based on the Lyapunov-Perron contraction mapping on spaces
of functions in time used in \cite{foliation}, and outlined in Section 1.  The fixed point of the mapping is a particular solution
of the differential equation whose initial value provides an enslavement
of either the high modes in terms of the low, in the case of a leaf 
in the unstable foliation, or vise-versa in the the case of the stable
foliation.   These leaves are manifolds and graphs of functions whose
Lipschitz constants are less than one if the linear term in the equation
sufficiently dominates the nonlinear term.  Both functions are then combined
to form yet another contraction mapping whose fixed point is the intersection
of the manifolds.  In the particular case where the leaf in the unstable foliation
passes through a steady state (as in the inertial manifold), the fixed point of the combined contraction mapping is the tracking initial condition for any point on the leaf in the stable foliation.

The key then, is to discretize the Lyapunov-Perron contraction mapping.
This was first done in \cite{rosa} for the particular case of
the inertial manifold by using piecewise constant functions
over increasingly finer time intervals.  We adapt that approach to the stable foliation
which requires an inner integration of the differential equation, which 
happens to be forward in time, so it is practical for PDEs.   This is done in 
Section 2.  We then
consider improvements based Aitken's acceleration and Simpson's method.
The new methods are applied to the particular case of an inertial manifold 
in Section 3, and then combined for stable and unstable foliations
to compute tracking initial conditions in Section 4.   We wrap up in Section 5 
with a comparison of long time dynamics of 
computed tracking initial conditions and a linearly projected initial conditions.  Public domain software for this approach is available at \cite{FOLI8}.

\section{Assumptions and Foliation Theory}
We recall here the main features of foliation theory for ODEs, following the presentation by Castaneda and Rosa \cite{foliation}, where the proof can be found.  Let $X$ and $Y$ be Banach spaces and $Z=X\times Y$ be endowed with the norm $\|z\|=\|(x,y)\|=max\{\|x\|,\|y\|\}$.  Let $F: Z\longrightarrow X$ and $G: Z\longrightarrow Y$ and $A$, $B$ be two linear bounded operators defined on $X$ and $Y$.  Consider the following system of ODEs:
\begin{equation}
\begin{aligned}
\label{ODE}
%\begin{cases}
&\dot{x} = Ax + F(x,y) \\
&\dot{y} = By + G(x,y).
%\end{cases}
\end{aligned}
\end{equation}
The assumptions are the following:
\begin{align}
& \|e^{At}\| \leq e^{\alpha t} \text{ and } \|e^{-Bt}\| \leq e^{-\beta t},\;\forall\;t\geq0, \text{ and for some } \alpha, \beta \in \mathbb{R}.\\
& H(z)=(F(z),G(z)) \text{ is Lipschitz with } Lip(H)\leq \delta \text{ and } H(0)=0.\\
& 2\delta < \beta - \alpha, \text{ called the spectral gap condition.}
\label{gap condition}
\end{align} 
Since $H(z)$ is Lipschitz and $A$, $B$ are bounded operators, it is known that the autonomous differential equation \begin{equation}\label{zODE} \dot{z} = Cz + H(z), \end{equation} where $C=A\times B$ possesses a global unique solution for any given initial condition.  We denote by $z(t,z_0)$, the solution of (\ref{zODE}) with initial condition $z(0)=z_0\in Z$, and $x(t,z_0)$ and $y(t,z_0)$, the $X$ and $Y$ components of $z(t,z_0)$, respectively.  Thus, $z(t,z_0)=(x(t,z_0),y(t,z_0))$, for all $t\in \mathbb{R}$ and $z_0\in Z$.

Typically, the nonlinear terms in most physical models are not globally Lipschitz.  If this is the case and the system is dissipative, the nonlinear terms can be truncated outside the absorbing ball.  More precisely, let $\rho$ be the radius of the absorbing ball.  Consider the prepared equation
\begin{equation}
	\dot{z} = Cz + H_{\rho}(z),
\end{equation}
where $H_{\rho} : Z \rightarrow Z$ which agrees with $H$ for $\|z\| \leq \rho$ and is globally Lipschitz.  Since the all the long time behavior of the original system is in the absorbing ball, such a preparation leaves that behavior unchanged.  One choice of $H_{\rho}$ is
\begin{equation}
\label{prepared equation}
H_{\rho} = \theta_{\rho}(\|z\|)H(z),
\end{equation} 
with
\begin{equation}
\theta_{\rho}(r) = \theta(\frac{r^2}{\rho^2}),\qquad\theta(s) = \begin{cases} 1,\;\text{for }s \in [0,1], \\ 2(s-1)^3 - 3(s-1)^2 + 1,\;\text{for }s\in[1,2], \\ 0,\;\text{for }s>2.\end{cases}
\end{equation}
%\begin{equation}
%\theta(s) = \begin{cases} 1,\;\text{for }s \in [0,1], \\ 2(s-1)^3 - 3(s-1)^2 + 1,\;\text{for }s\in[1,2], \\ 0,\;\text{for }s>2.\end{cases}
%\end{equation} 
%The first result in \cite{foliation} is called the cone invariance property.  It is a stronger version of the squeezing property (where the gap condition does not hold), which was originally introduced for the Navier-Stokes equation in \cite{squeezing} and improved in \cite{FMT}.  
%  
%\begin{prop} (Cone Invariance Property)
%\label{Cone Invariance Property}
%If $z_1 \neq z_2$ and denote $u(t) = x(t,z_1) - x(t,z_2)$ and $v(t) = y(t,z_1) - y(t,z_2)$, then either
%\begin{enumerate}[(i)]
%\item{$\|v(t)\| < \|u(t)\|$, $\forall t \in \mathbb{R}$}
%\item{$\|v(t)\| > \|u(t)\|$, $\forall t \in \mathbb{R}$}
%\item{$\exists\; t_0 \in \mathbb{R}$ such that \begin{equation*} \begin{cases} \|v(t)\| < \|u(t)\|\;\;,\;\forall\; t<t_0\\\|v(t)\| = \|u(t)\|\;\;,\;\forall\; t=t_0\\\|v(t)\| > \|u(t)\|\;\;,\;\forall\; t>t_0\end{cases}\end{equation*}}
%\end{enumerate}
%\end{prop}
%The cone invariance property defines equivalence relations on $Z$.  One of the main results in \cite{foliation} states that these equivalence classes are the graphs of Lipschitz functions that are the leaves in the foliation.

%\begin{pdef}
%We write $z_1 \sim_x z_2$ if either $z_1 = z_2$ or $\|v(t)\| < \|u(t)\|$, $\forall\; t\geq0$, and \\
%$z_1 \sim_y z_2$ if either $z_1 = z_2$ or $\|v(t)\| > \|u(t)\|$, $\forall\; t\leq0$.
%\end{pdef}

The main result in \cite{foliation} is to characterize foliations by the exponential growth/decay of the difference of any two solutions with initial data in the same leaf.

\begin{thm}(Foliation Theorem)
\label{Foliation thm}
\begin{enumerate}[$\qquad(1)$]
\item {(Stable) $Z = \bigcup_{y \in Y} \mathcal{M}_y$, where
\begin{enumerate}[$(i)$]
 \item {$\mathcal{M}_y = \{z_0\in Z :\|z(t,z_0)-z(t,(0,y))\| \leq \|z_0 - (0,y)\|e^{(\alpha+\delta)t}$, $\forall\; t\geq0  \} $}
 \item {$\mathcal{M}_y = graph(\Phi_y)$, for some $\Phi_y: X \rightarrow Y$ such that
        $\Phi_y(0)=y.$}
\end{enumerate}}

\item {(Unstable)  $Z = \bigcup_{x \in X} \mathcal{N}_x$, where 
\begin{enumerate}[$(i)$]
 \item {$\mathcal{N}_x = \{z_0\in Z :\|z(t,z_0)-z(t,(x,0))\| \leq \|z_0 - (x,0)\|e^{(\beta-\delta)t}$, $\forall\; t\leq0  \} $}
 \item {$\mathcal{N}_x = graph(\Psi_x)$, for some $\Psi_x: Y \rightarrow X$ such that
        $\Psi_x(0)=x.$}
\end{enumerate}}

\item{Both $\Phi_{y}$ and $\Psi_{x}$ have Lipschitz constants bounded by $\delta/(\beta-\alpha-\delta)$.}

\end{enumerate}
\end{thm}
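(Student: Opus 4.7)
The plan is a Lyapunov--Perron fixed-point construction, described here for part (1); the mirror argument under $t \mapsto -t$, $A \leftrightarrow B$, and $\alpha \leftrightarrow \beta$ yields part (2). Fix the base point $(0,y)$, write $\zeta(t) = z(t,(0,y))$, and treat $x_0 \in X$ as a free parameter. I look for $z_0 = (x_0,y_0) \in \mathcal M_y$ by finding the deviation $w(\cdot) = z(\cdot,z_0) - \zeta(\cdot)$ in the weighted space
\[
\mathcal F_\mu = \bigl\{ w \in C([0,\infty),Z):\ \|w\|_\mu \equiv \sup_{t\geq 0} e^{-\mu t}\|w(t)\| < \infty \bigr\},
\]
where $\mu \in (\alpha+\delta,\beta-\delta)$, a nonempty interval by the spectral gap (\ref{gap condition}). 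Splitting $w = (w_X,w_Y)$, variation of parameters evolves $w_X$ forward from $x_0$ while $w_Y$ must be written as a backward integral from $+\infty$ (otherwise $e^{Bt}$ would force growth at rate $e^{\beta t}$):
\begin{equation*}
\mathcal T[w](t) = \Bigl( e^{At}x_0 + \int_0^t e^{A(t-s)}[F(\zeta+w) - F(\zeta)]\,ds,\ -\int_t^\infty e^{B(t-s)}[G(\zeta+w) - G(\zeta)]\,ds \Bigr),
\end{equation*}
with all arguments under the integrals evaluated at $s$.

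Using $\mathrm{Lip}(H) \leq \delta$ together with $\|e^{At}\| \leq e^{\alpha t}$ and $\|e^{B(t-s)}\| \leq e^{\beta(t-s)}$ for $s \geq t$, the two integrals contribute factors $\delta/(\mu-\alpha)$ and $\delta/(\beta-\mu)$ to $\|\mathcal T[w_1]-\mathcal T[w_2]\|_\mu$, both strictly less than $1$ on the chosen interval. Thus $\mathcal T$ is a contraction on $\mathcal F_\mu$, and the unique fixed point $w^*$ defines $\Phi_y(x_0) := y + w^*_Y(0)$; the case $x_0 = 0$ has the zero fixed point, so $\Phi_y(0) = y$. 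To reach the sharp rate in (1)(i), I would substitute the ansatz $\|w^*(s)\| \leq K e^{(\alpha+\delta)s}$ back into the fixed-point formula: the $X$-integral then produces $K e^{(\alpha+\delta)t} + (\|x_0\|-K)e^{\alpha t}$ and the $Y$-integral produces $\tfrac{\delta K}{\beta-\alpha-\delta} e^{(\alpha+\delta)t}$. Choosing $K = \|x_0\|$ annihilates the $X$-correction, while the gap condition forces $\delta/(\beta-\alpha-\delta) < 1$; together these show $\|w^*(0)\| = \|x_0\|$ and hence $\|w^*(t)\| \leq \|w^*(0)\| e^{(\alpha+\delta)t}$, which is (1)(i).

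The reverse inclusion needed for (1)(ii) follows because any $z_0$ satisfying (1)(i) must itself solve the $\mathcal T$ fixed-point equation: any discrepancy in the backward-from-$+\infty$ representation of $w_Y$ would appear as a nonzero multiple of $e^{Bt}$, whose norm grows at least as $e^{\beta t}$ by $\|e^{-Bt}\| \leq e^{-\beta t}$, contradicting the allowed rate $e^{(\alpha+\delta)t}$. Uniqueness of the fixed point then gives $\mathcal M_y = \mathrm{graph}(\Phi_y)$. The same $K e^{(\alpha+\delta)t}$ ansatz applied to the difference $w^* - w^{*\prime}$ of fixed points for two parameters $x_0, x_0'$ yields $K = \|x_0-x_0'\|$; evaluating the $Y$-component at $t=0$ gives $\|\Phi_y(x_0) - \Phi_y(x_0')\| \leq \delta\|x_0-x_0'\|/(\beta-\alpha-\delta)$, the bound in (3).

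The main obstacle I anticipate is the mismatch between the weight required for contraction, which must lie strictly inside $(\alpha+\delta,\beta-\delta)$, and the conclusions of the theorem, which are sharp at the endpoint $\alpha+\delta$. The a priori bootstrap sketched above resolves this only because the correction term in the $X$-integral cancels exactly under the max norm on $Z$ for the choice $K = \|x_0\|$; without that exact cancellation one would obtain a weaker Lipschitz constant of the form $C\delta/(\beta-\alpha-2\delta)$ rather than the stated $\delta/(\beta-\alpha-\delta)$.
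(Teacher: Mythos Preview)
Your proposal is correct and follows essentially the same Lyapunov--Perron route that the paper adopts; note that the paper does not actually prove Theorem~\ref{Foliation thm} but cites Castaneda--Rosa \cite{foliation} for the proof, and the operator $\mathcal{T}_{z_0}$ displayed in \eqref{T map} is exactly your $\mathcal{T}$ (with $z_0=(0,y)$), acting on the same weighted space $\mathcal{F}_\sigma$ of \eqref{def Fsigma}. Your endpoint bootstrap to recover the sharp rate $\alpha+\delta$ and the Lipschitz constant $\delta/(\beta-\alpha-\delta)$ is the standard refinement, made rigorous by observing that $\mathcal{T}$ maps the closed subset $\{w\in\mathcal{F}_\mu:\|w(t)\|\le\|x_0\|e^{(\alpha+\delta)t}\ \forall t\ge0\}$ into itself.
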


%\begin{thm} (Stable Foliation)
%\label{Stable Foliation Thm}
% $Z = \bigcup_{y \in Y} \mathcal{M}_y$, where
%\begin{enumerate}[(i)]
% \item {$\mathcal{M}_y = \{z_0\in Z :\|z(t,z_0)-z(t,(0,y))\| \leq \|z_0 - (0,y)\|e^{(\alpha+\delta)t}$, $\forall\; t\geq0  \} $}
% \item {$\mathcal{M}_y = graph(\Phi_y)$, for some $\Phi_y: X \rightarrow Y$ such that
%        $$Lip(\Phi_y) \leq \frac{\delta}{\beta-\alpha-\delta},\;\Phi_y(0)=y.$$}
%\end{enumerate}
%\end{thm}
%
%\begin{thm} (Unstable Foliation)
%\label{Untable Foliation Thm}
% $Z = \bigcup_{x \in X} \mathcal{N}_x$, where 
%\begin{enumerate}[(i)]
% \item {$\mathcal{N}_x = \{z_0\in Z :\|z(t,z_0)-z(t,(x,0))\| \leq \|z_0 - (x,0)\|e^{(\beta-\delta)t}$, $\forall\; t\leq0  \} $}
% \item {$\mathcal{N}_x = graph(\Psi_x)$, for some $\Psi_x: Y \rightarrow X$ such that
%        $$Lip(\Psi_y) \leq \frac{\delta}{\beta-\alpha-\delta},\;\Psi_x(0)=x.$$}
%\end{enumerate}
%\end{thm}
The terminology stable foliation (unstable foliation) comes from the classic case where $\alpha < 0 < \beta$, in which $\mathcal{M}_0$ ($\mathcal{N}_0$) are respectively the stable (unstable) manifolds of $0$.  The framework, however, also applies if $\alpha<\beta<0$ or $0<\alpha<\beta$.  Regradless, $\mathcal{M}_0$ and $\mathcal{N}_0$ are both invariant; $$z_0 \in \mathcal{M}_0 \Rightarrow z(t,z_0) \in \mathcal{M}_0\;\forall\;t\in\mathbb{R},$$ and similarly for $\mathcal{N}_0$.  As a consequence, given any initial data $z_0\in Z$, properties $(i)$ in Theorem \ref{Foliation thm} define distinguished solutions in $\mathcal{M}_0$ and $\mathcal{N}_0$.

%\begin{lem}
% For each $x_1\in X$, $y_1\in Y$, we have that $\mathcal{M}_{y_1} \cap \mathcal{N}_{x_1} $ is a single element of $Z$.
%\end{lem}
%We will present the proof of this lemma in that this is the main idea of the algorithm. \\
%\pf
%Define $\Sigma: Z \rightarrow Z$ by $$(x,y)\mapsto (\Psi_{x_1}(y),\Phi_{y_1}(x)).$$
%By previous two theorems, $$Lip(\Sigma) \leq \frac{\delta}{\beta-\alpha-\delta} < 1.$$  Thus, it has a unique fixed point (x,y).
%Note that to be a fixed point of $\Sigma$ is equivalent to being in $\mathcal{M}_{y_1} \cap \mathcal{N}_{x_1}$.  
%This completes the proof.$\Box$

\begin{prop} (Exponential Tracking)
\label{Exponential Tracking}
 Given $z_0 \in Z$, there exists a unique $z_0^{+} \in \mathcal{N}_0$
\begin{equation*}
 \|z(t,z_0)-z(t,z_0^{+})\| \leq e^{(\alpha+\delta)t} \|z_0-z_0^{+}\|,\;\;\forall\; t\geq 0.
\end{equation*}
\end{prop}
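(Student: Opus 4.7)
The plan is to identify $z_0^+$ as the unique point in the intersection of $\mathcal{N}_0$ with the stable leaf passing through $z_0$, and then to derive the exponential tracking bound.

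By Theorem~\ref{Foliation thm}(1), $z_0$ belongs to a unique stable leaf $\mathcal{M}_{y_0}$, which is the graph of $\Phi_{y_0}\colon X\to Y$; by part~(2), $\mathcal{N}_0=\mathrm{graph}(\Psi_0)$ with $\Psi_0\colon Y\to X$. A candidate $z_0^+=(x^+,y^+)\in\mathcal{N}_0\cap\mathcal{M}_{y_0}$ must satisfy $y^+=\Phi_{y_0}(x^+)$ and $x^+=\Psi_0(y^+)$, hence $x^+$ is a fixed point of $\Psi_0\circ\Phi_{y_0}$. By part~(3), both $\Phi_{y_0}$ and $\Psi_0$ have Lipschitz constant at most $\delta/(\beta-\alpha-\delta)$, which is strictly less than~$1$ by the spectral gap condition~\eqref{gap condition}; the composition is therefore a contraction on $X$, and Banach's fixed point theorem yields a unique $x^+$, hence a unique $z_0^+$.

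For the tracking inequality, both $z_0$ and $z_0^+$ lie in $\mathcal{M}_{y_0}$, so property~(i) gives estimates on $\|z(t,z_0)-z(t,(0,y_0))\|$ and $\|z(t,z_0^+)-z(t,(0,y_0))\|$ at the rate $e^{(\alpha+\delta)t}$. The main obstacle is that a naive triangle inequality produces the suboptimal prefactor $\|z_0-(0,y_0)\|+\|z_0^+-(0,y_0)\|$, rather than the sharp $\|z_0-z_0^+\|$ that the statement demands. To sidestep this, I would re-run the Lyapunov--Perron fixed-point argument underlying Theorem~\ref{Foliation thm} in~\cite{foliation}, but with $z_0^+$ playing the role of the leaf's base point $(0,y_0)$. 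Concretely, let $u(t):=z(t,z_0)-z(t,z_0^+)$; applying variation of parameters component-wise to the equation satisfied by $u$, and requiring the $Y$-component to remain bounded forward in time (which forces $u_Y(0)$ into Lyapunov--Perron form), one obtains a fixed-point equation for $u$ on the weighted Banach space
\[
\Bigl\{u\in C([0,\infty);Z):\sup_{t\geq 0}e^{-(\alpha+\delta)t}\|u(t)\|<\infty\Bigr\}.
\]
The spectral gap condition $2\delta<\beta-\alpha$ makes the associated operator a contraction, and its unique fixed point satisfies $\sup_{t\geq 0}e^{-(\alpha+\delta)t}\|u(t)\|\leq\|u(0)\|=\|z_0-z_0^+\|$, which is exactly the asserted bound. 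The constraint that $z_0-u(0)\in\mathcal{N}_0$ cuts out precisely the intersection point found in the first step, so the two constructions agree and uniqueness is confirmed.
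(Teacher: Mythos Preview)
The paper does not supply its own proof of this proposition; it is stated as a consequence of Theorem~\ref{Foliation thm} and attributed to \cite{foliation}. The surrounding discussion does, however, identify the relevant mechanism: $z_0^+$ is the unique point in $\mathcal{M}_{y}\cap\mathcal{N}_0$ (where $\mathcal{M}_y$ is the stable leaf through $z_0$), obtained as the fixed point of the map $\Sigma:(x,y)\mapsto(\Psi_0(y),\Phi_y(x))$ on $Z$. Your first step recovers exactly this, merely recasting the product map $\Sigma$ as the composition $\Psi_0\circ\Phi_{y_0}$ on $X$; the two formulations are equivalent, and the Lipschitz bound from part~(3) of Theorem~\ref{Foliation thm} together with \eqref{gap condition} makes either a contraction. (A small notational caution: in the paper's indexing the subscript on $\mathcal{M}_y$ is the $Y$-intercept of the leaf, which need not be the $Y$-component of $z_0$.)

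Your second step contains a genuine gap. You assert that the Lyapunov--Perron operator $\mathcal{T}_{z_0^+}$ is a contraction on the space with weight $e^{-(\alpha+\delta)t}$, but this is false: the contraction constant is $\kappa=\max\{\delta/(\beta-\sigma),\,\delta/(\sigma-\alpha)\}$, and at the endpoint $\sigma=\alpha+\delta$ the second term equals~$1$. So Banach's theorem does not apply on that space, and even granting existence of the fixed point, the bound you write down does not follow from the usual estimate $\|\varphi\|_\sigma\le\|x\|/(1-\kappa)$ (which is the form used elsewhere in the paper and blows up as $\sigma\downarrow\alpha+\delta$). To salvage the argument you must work at $\sigma\in(\alpha+\delta,\beta-\delta)$, observe that the $P$- and $Q$-components of the fixed-point equation actually give the sharper bound $\|\varphi\|_\sigma\le\max\{\|x-x_0^+\|,\,\kappa\|\varphi\|_\sigma\}$, conclude $\|\varphi\|_\sigma\le\|x-x_0^+\|\le\|z_0-z_0^+\|$ uniformly in $\sigma$, and then let $\sigma\downarrow\alpha+\delta$. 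Alternatively, one can use the Cone Invariance Property (Proposition~\ref{Cone Invariance Property}) to show that the difference of two trajectories on the same stable leaf always has dominant $X$-component, and then apply Gronwall as in the proof of Lemma~\ref{projected tracking}; this is closer in spirit to the argument in \cite{foliation}.
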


\begin{pdef}{\ }
 The solution $z(\cdot, z_0^+)$ is called the {\it{exponential tracking}} of $z(\cdot, z_0)$ and $z_0^+$ is called the {\it{tracking initial condition}} of $z_0$.
\end{pdef}
By Theorem \ref{Foliation thm}, the entire foliation is established. The intersection of a leaf from the stable foliation with one from the unstable foliation is a single element in $Z$; more precisely, for each $x_1\in X$, $y_1\in Y$, $\mathcal{M}_{y_1} \cap \mathcal{N}_{x_1} $ is a single element of $Z$.  This is proved in \cite{foliation} by showing that $$\Sigma:Z \rightarrow Z,\;\Sigma:(x,y)\mapsto (\Psi_{x_1}(y),\Phi_{y_1}(x)),$$ has a unique fixed point, which is the intersection of two manifolds.  In Section \ref{computing tracking initial condition}, we will implement an approximate of the $\Sigma$ map iteratively to compute the tracking initial condition. 
 
If \begin{equation}\label{condition on inertial}\alpha+\delta<0 \text{, and } \dim(Y)<\infty \end{equation} then $\mathcal{N}_0$ is an {\it{inertial manifold}}, i.e. an exponentially attracting, finite dimensional, Lipschitz manifold.  Some of the key features of the foliation are illustrated in Figure \ref{foliation for toy}(A).

\begin{figure}[h!]
 \psfrag{X}{\tiny$X$}
 \psfrag{Y}{\tiny$Y$}
 \psfrag{Mz0}{\tiny$\mathcal{M}_{z_0}$}
 \psfrag{Nz0}{\tiny$\mathcal{N}_{z_0}$}
 \psfrag{N0}{\tiny$\mathcal{N}_0$}
 \psfrag{M0}{\tiny$\mathcal{M}_0$}
 \psfrag{z0}{\tiny$z_0$}
 \psfrag{z+}{\tiny$z_0^+$}
 \psfrag{z1}{\tiny$z_1$}
 \psfrag{t0}{\tiny$t$}
 \psfrag{Mz0t0}{\tiny$\mathcal{M}_{z(t,z_0)}$}
 \psfrag{z(t0,z0)}{\tiny$z_0'$}
 \psfrag{z(t0,z1)}{\tiny$\;\;z_1'$}
 \psfrag{(Inertial Manifold)}{}

 \subcaptionbox{Foliations for \eqref{ode for toy problem}.\\$z_i':=z(t_0,z_i)$ for $i=0$, and $1$.}{
  \includegraphics[width=1.6in, height=1.8in]{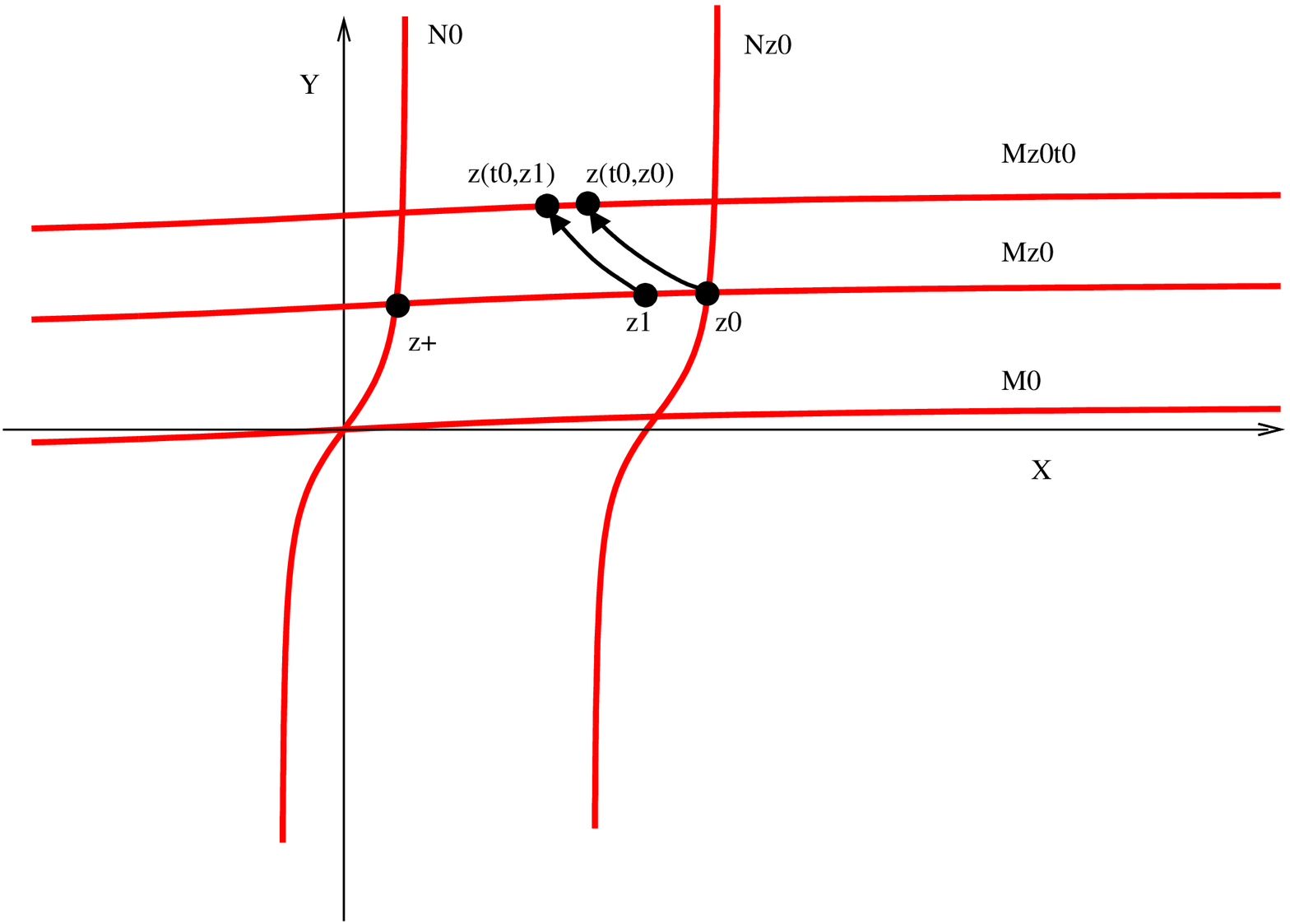}
 }%\qquad
 \subcaptionbox{Number of iterations versus the absolute error.}{
 \includegraphics[width=1.6in, height=1.8in]{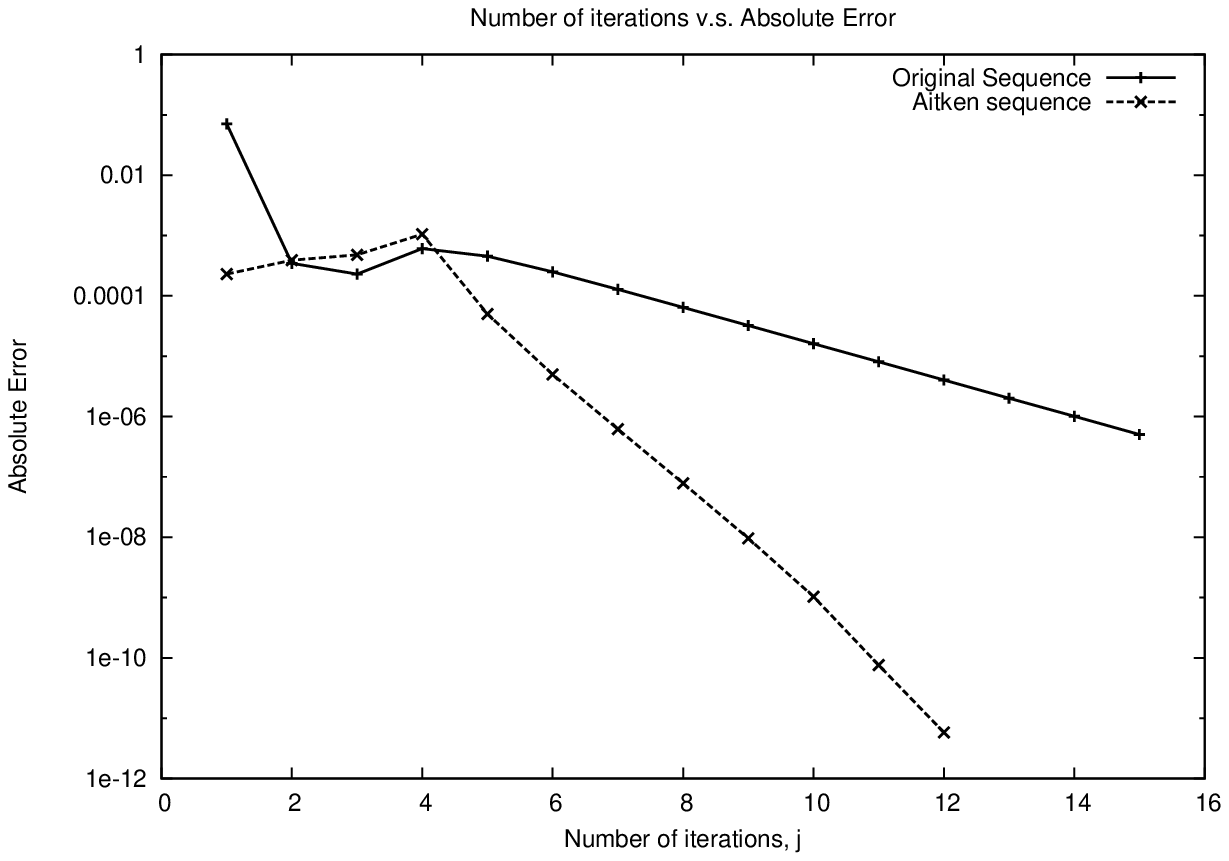}
 }
 \subcaptionbox{Number of iterations versus the absolute error.}{
 \resizebox{!}{!}{\begin{tabular} {|l|l|l|}
\hline
\small$j$ & \small SIMP & \small SIMPGS \\ \hline
\small 1	&	\small 0.2699E-1	&	\small 0.9353E-7	\\ \hline
\small 2	&	\small 0.3128E-5	&	\small 0.4832E-6	\\ \hline
\small 3	&	\small 0.2435E-5	&	\small 0.4831E-6	\\ \hline
\small 4	&	\small 0.4836E-6	&	\small 0.4831E-6	\\ \hline
\small 5	&	\small 0.4830E-6	&	\small 0.4831E-6	\\ \hline
\small 6	&	\small 0.4831E-6	&	\small 0.4831E-6	\\ \hline
\small 7	&	\small 0.4831E-6	&	\small 0.4831E-6	\\  \hline
\end{tabular}}
 } 
 
\caption{(A) $\mathcal{N}_0$ is the inertial manifold and $z_0^+$ is the tracking initial condition for $z_0$.  $\mathcal{M}_{z_0}$ is the leaf in the stable foliation through $z_0$ and has property that for any two points $z_0$, $z_1$ on $\mathcal{M}_{z_0}$ and any time $t>0$, $z(t,z_0)$ and $z(t,z_1)$ lie on the same manifold $\mathcal{M}_{z(t,z_0)}$.  (B) Performances of PWCONST and PWCONST along with the Aitken's accelerationtest for \eqref{ode for toy problem} with parameters $p=10$, $\tilde{z}_0=(1,1)$ and $\tilde{x}=3$. (C) Performances of SIMP and SIMPGS for the same settings as (B).}
\label{foliation for toy}
\end{figure}

\subsection{An Example: Test Problem}
\label{Toy Problem}
In this section, we will give an example that will be used to demonstrate the algorithms we developed in this article.  Consider the simplest system:
\begin{equation}
\label{linear system}
{d\over dt} {\tilde{x}\choose \tilde{y}}={-\tilde{x}\choose \tilde{y}} \quad \quad
\end{equation}
    The foliation of the linear system \eqref{linear system} consists of vertical and horizontal lines.  In order to obtain a nontrivial foliation, we will apply the transformation $T=T_2 \circ T_1$, where
\begin{equation}
 T_1{\tilde{x}\choose \tilde{y}}={\tilde{x} + \frac{\tilde{y}}{p\sqrt{1+\tilde{y}^2}}\choose \tilde{y}}, \quad T_2{\tilde{x}\choose \tilde{y}}={\tilde{x}\choose \tilde{y}+\frac{1}{p}\tan^{-1}(\tilde{x})}. \quad
\end{equation}
By an elementary calculation we obtain
\begin{equation}
 {x \choose y} = T{\tilde{x}\choose \tilde{y}}={\tilde{x} + \frac{\tilde{y}}{p\sqrt{1+\tilde{y}^2}}\choose \tilde{y} + \frac{1}{p}\tan^{-1}(\tilde{x} + \frac{\tilde{y}}{p\sqrt{1+\tilde{y}^2}})}. \quad
\end{equation}
 
After the transformation $T$, the new ODE can be written as:
\begin{equation}
\label{ode for toy problem}
\begin{aligned}
\frac{dx}{dt}=&-x+\frac{y-{\frac{1}{p}}tan^{-1}(x)}{p({1+({y-{\frac{1}{p}}tan^{-1}(x)})^2})^{1/2}}+\frac{y-{\frac{1}{p}}tan^{-1}(x)}{p({1+({y-{\frac{1}{p}}tan^{-1}(x)})^2})^{3/2}}\\
\frac{dy}{dt}=&y-\frac{1}{p}tan^{-1}(x)+\frac{1}{p(1+x^2)}[-x+\frac{y-{\frac{1}{p}}tan^{-1}(x)}{p({1+({y-{\frac{1}{p}}tan^{-1}(x)})^2})^{1/2}}+\\
&\frac{y-{\frac{1}{p}}tan^{-1}(x)}{p({1+({y-{\frac{1}{p}}tan^{-1}(x)})^2})^{3/2}}].
\end{aligned}
\end{equation}
Note that $p$ is a parameter that controls the Lipschitz constant.  In fact, we can characterize the complete foliation.  However, for the purpose of this article, we are interested in only the inertial manifold (invariant unstable manifold) and a leaf in the stable foliation.  

First consider the inertial manifold for the new system.  Since the invariant unstable manifold for the original system is the $y$-axis, its image under the map $T$ is the invariant unstable manifold for the new system.  That is
\begin{equation}
{x \choose y}=T{0 \choose \tilde{y}} ={ {\frac{\tilde{y}}{p\sqrt{1+\tilde{y}^2}}} \choose \tilde{y}+\frac{1}{p}\tan^{-1}(\frac{\tilde{y}}{p\sqrt{1+\tilde{y}^2}})}.
\end{equation}

Since the leaf in the stable foliation through $\tilde{z_0}$ for the original system is the horizontal line, $\tilde{y}=\tilde{y}_0$ for a given $\tilde{y}_0$, the transversal manifold for the new system is
\begin{equation}
{x \choose y}=T{\tilde{x} \choose \tilde{y}_0} = {\tilde{x} + \frac{1}{p\sqrt{1+\tilde{y}_0^2}} \choose 1 + \frac{1}{p}\tan^{-1}(\tilde{x} + \frac{1}{p\sqrt{1+\tilde{y}_0^2}})}. \quad
\end{equation}
By substitution, one obtains $$y = \tilde{y}_0 + \frac{1}{p} \tan^{-1}(x).$$

Given any initial condition $\tilde{z_0}=(\tilde{x_0},\tilde{y_0})$, the tracking initial condition for the original system is $(0,\tilde{y_0})$.  Thus, the tracking initial condition for the new system is
\begin{equation}\label{tracking for toy}T{0 \choose \tilde{y_0}} = {x \choose y} = {\frac{1}{p\sqrt{1+\tilde{y_0}^2}} \choose \tilde{y_0}+\frac{1}{p}\tan^{-1}(\frac{1}{p\sqrt{1+\tilde{y_0}^2}})}.\end{equation}

\section{Computation of the Stable Foliation}
\subsection{PWCONST algorithm}
\label{JRT section} 
In \cite{rosa}, an algorithm is developed for the accurate computation of inertial manifolds under the additional assumption \eqref{condition on inertial}.  The main idea is to find the fixed point of the contraction mapping
\begin{equation} \label{U map} \mathcal{U}(\psi,y)(t) = e^{tB}y + \int_{-\infty}^{t}e^{(t-s)A}F(\psi(s))ds - \int_{t}^{0}e^{(t-s)B}G(\psi(s))\;ds\;,\;\forall\; t\leq 0\;,\end{equation}
on the Banach space \begin{equation}\label{def Gsigma}\mathcal{G}_{\sigma}=\{\psi \in C((-\infty,0],Z); \; \|\psi\|_{\sigma}=\sup_{t\leq 0}e^{\sigma t}\|\psi(t)\|<\infty \},\;\sigma\in (\alpha+\delta,\beta-\delta).\end{equation} The inertial manifold is the graph of the function: \begin{equation} \Psi(y) = P\psi(0), \end{equation} where $\psi$ is the fixed point of $\mathcal{U}$ and $P$ is a projector from $Z$ onto $X$.  In the $j^{th}$ iteration, $\psi$ is approximated by $\psi^{(j)}$, a function that is piecewise constant on $N_j$ time intervals of length $h_j$.  It is shown in \cite{rosa} that $\psi^{(j)}\rightarrow \psi$ as $j \rightarrow \infty$ provided $N_jh_j \rightarrow \infty$ (e.g. $h_j=2^{-j}$ and $N_j = j2^j$).  In this approach the integrals in $\mathcal{U}$ can be evaluated explicitly.

To compute the leaf in the stable foliation through $z_0$, we follow the existence proof in \cite{foliation} and approximate the fixed point of the mapping
\begin{align}
\label{T map}
 \mathcal{T}_{z_0}(\varphi, x)(t) = e^{tA}x &+ \int_0^t e^{(t-s)A}[F(\varphi(s)+z(s,z_0))-F(z(s,z_0))]\;ds\\
                                            &- \int_t^{\infty} e^{(t-s)B}[G(\varphi(s)+z(s,z_0))-G(z(s,z_0))]\;ds,
\end{align} on \begin{equation}\label{def Fsigma} \mathcal{F}_{\sigma}=\{\varphi \in C([0,\infty],Z); \; \|\varphi\|_{\sigma}=\sup_{t\geq 0}e^{-\sigma t}\|\varphi(t)\|<\infty \}, \;\sigma\in (\alpha+\delta,\beta-\delta). \end{equation}   Let $\varphi$ be the fixed point of $\mathcal{T}$.  The leaf in the stable foliation through $z_0$ is the graph of the function: \begin{equation}\Phi_{z_0}(x) = y_0 + Q\varphi(x-x_0)(0), \end{equation}where $Q=I-P$.  We modify the algorithm in \cite{rosa} to fit $\mathcal{T}$.  The two main differences are: 1. one needs to solve for the ODE forward in time--- we use a 4-th order Runge-Kutta method (RK4); 2. an additional function evaluation is needed.  The rate of convergence is linear so that we can use the Aitken acceleration process, as discussed in the next section, to gain a better approximation.  Since piecewise constant functions are used in this algorithm, we denote it by PWCONST. 

\subsubsection{Aitken Acceleration}
Aitken's acceleration, also known as Aitken's $\Delta^2$ process, is used for accelerating the rate of convergence of a sequence.  The method works if one has a linear rate of convergence sequence. 
\begin{pdef}
Given a sequence $\{x_k\}_{k=0}^{\infty}$ in $\mathbb{R}$,  Aitken's acceleration sequence, $\{\mathcal{A}x_k\}_{k=0}^{\infty}$, is defined as
\begin{equation}
\mathcal{A}x_k = x_k - \frac{(\Delta x_k)^2}{\Delta^2x_k},
\end{equation}
where $\Delta x_k := x_{k+1}-x_{k}$ and $\Delta^2x_k:=\Delta x_{k+1}-\Delta x_k$.
\end{pdef}

This is the classic Aitken acceleration for a sequence in $\mathbb{R}$.  We will also apply a vector version found in \cite{Ait}.  
\begin{pdef}
Given a sequence  $\{z_k\}_{k=0}^{\infty}$ in $\mathbb{R}^n$, define \begin{equation*}\Delta z_k := (z_{k+1}-z_{k},...,z_{k+n}-z_{k+n-1})\;\text{and}\;\Delta^2 z_k:=\Delta z_{k+1}-\Delta z_k.\end{equation*}  Aitken's acceleration sequence for $\mathbb{R}^n$ is defined as follows:
\begin{equation}
\mathcal{A}z_k = z_k -(\Delta z_k) (\Delta^2 z_k)^{-1}(z_{k+1}-z_{k}).
\end{equation}
\end{pdef}
Note that both $\Delta z_k$ and $\Delta^2 z_k$ are matrices of size $n \times n$. 

The advantage of the Aitken sequence is that it converges much faster to the limit than the original sequence does.  Moreover, computing the Aitken sequence is much cheaper than computing the original sequence because it is applied to elements in phase space $z_k \in Z$, rather than (a discretized version of) the function space on which $\mathcal{U}$, $\mathcal{T}$ act.  It amounts to post-processing the original sequence.  A disadvantage is that it requires $n+k$ iterations of the original mapping $\mathcal{U}$ ($\mathcal{T}$) to produce $k$ terms in the Aitken sequence, which may be prohibitive if $n$, the dimension of $Z$, is large.  We will see these numerical results in the next section.   

%\subsubsection{Numerical Result for the Test problem}
%\begin{figure}
%\begin{center}
%\includegraphics[width=4in]{JRTwAitken_rev}
%\end{center}
%\caption{The relation between the number of iterations and the error.  This is for the test problem and parameters are $p=10$, $\tilde{z}_0=(1,1)$ and $\tilde{x}=3$.}
%\label{Toy with Aitken}
%\end{figure}

%In this section, we will show two numerical results.  First, we illustrate the rate of convergence of the PWCONST algorithm applied to the test problem.  Second, we demonstrate how Aitken's $\Delta^2$ process improves the convergence.  

%In the following test, $p=10$, $\tilde{z}_0=(1,1)$ and $\tilde{x}=3$ from the origin system and thus, after the transformation, the inputs for the algorithm are \begin{equation*}(x_0,y_0)=T(1,1)=(1+\frac{1}{10\sqrt{2}},1+\frac{1}{10}tan^{-1}(1+\frac{1}{10\sqrt{2}})),\;  x=3+\frac{1}{10\sqrt{2}},\end{equation*} and the output should be $$y=1+\frac{tan^{-1}(u)}{10}=1+\frac{tan^{-1}(3+\frac{1}{10\sqrt{2}})}{10}.$$ The numerical result is shown in Figure \ref{foliation for toy}(B).  The error of the original sequence decreases roughly by a factor of $1/2$ as $j$ increases; the error of the Aitken's sequence decreases roughly by a factor of $10^{-1}$.

\subsection{SIMP Algorithm}
\label{simp}
First, we give a recursive relation for $ \mathcal{T}_{z_0}(\varphi, x)$ in the time variable .  Second, we introduce the main algorithm.   At the end of this section, numerical results for the test problem \ref{ode for toy problem} will be given.

Let $\{t_i\}_0^{N}$ be the uniform partition of a time interval and let $h:=t_{i+1}-t_{i}$.  Let $P$ be the projector from $Z$ onto $X$, i.e. $Pz = x$, where $ z = (x,y)$ and $Q=I-P$.  
%Recall that 
%\begin{align*}
% \mathcal{T}_{z_0}(\varphi, x)(t) = e^{tA}x &+ \int_0^t e^{(t-s)A}[F(\varphi(s)+z(s,z_0))-F(z(s,z_0))]ds\\
%                                            &- \int_t^{\infty} e^{(t-s)B}[G(\varphi(s)+z(s,z_0))-G(z(s,z_0))]ds.
%\end{align*}

\subsubsection{A Recursive Relation}

We state the recursive relations in the following proposition.
\begin{prop}{\ }
\label{recursive relation}
   \begin{enumerate}[(i)]
     \item{For $i=1,\;2,\dots,\;N-1$,  the $X$-component of $\mathcal{T}_{z_0}(\varphi, x)$ is
                \begin{align*}
P \mathcal{T}_{z_0}(\varphi, x)(t_{i+1})=&e^{(t_{i+1}-t_{i-1})A}P \mathcal{T}_{z_0}(\varphi, x)(t_{i-1})+ \\
                                                                            & \int_{t_{i-1}}^{t_{i+1}}e^{(t_{i+1}-s)A}[F(\varphi(s)+z(s,z_0))-F(z(s,z_0))]ds,
                \end{align*}
               where $P \mathcal{T}_{z_0}(\varphi, x)(t_{0})=x$ and $$P \mathcal{T}_{z_0}(\varphi, x)(t_{1})=e^{t_1A}x +  \int_0^{t_1} e^{(t_1-s)A}[F(\varphi(s)+z(s,z_0))-F(z(s,z_0))]\;ds.$$
                }

  \item{For $i=N-1,\;N-2,\dots,\;1$, the $Y$-component of $\mathcal{T}_{z_0}(\varphi, x)$ is
             \begin{align*}
             Q\mathcal{T}_{z_0}(\varphi, x)(t_{i-1})=&e^{(t_{i-1}-t_{i+1})B} Q\mathcal{T}_{z_0}(\varphi, x)(t_{i+1})- \\
                                                       &\int_{t_{i-1}}^{t_{i+1}} e^{(t_{i-1}-s)B}[G(\varphi(s)+z(s,z_0))-G(z(s,z_0))]\;ds,
            \end{align*}
           where $Q\mathcal{T}_{z_0}(\varphi, x)(t_{N})=- \int_{t_N}^{\infty} e^{(t_N-s)B}[G(\varphi(s)+z(s,z_0))-G(z(s,z_0))]\;ds$ and $$Q\mathcal{T}_{z_0}(\varphi, x)(t_{N-1})=Q\mathcal{T}_{z_0}(\varphi, x)(t_{N})e^{(t_{N-1}-t_{N})B}-\int_{t_{N-1}}^{t_N} e^{(t_{N-1}-s)B}[G(\varphi(s)+z(s,z_0))-G(z(s,z_0))]\;ds.$$
}
   \end{enumerate}
\end{prop}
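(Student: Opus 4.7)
The plan is to verify both recursions directly by splitting integrals at intermediate times and invoking the semigroup property $e^{(t-s)A} = e^{(t-r)A}e^{(r-s)A}$ (and likewise for $B$). Applying the projector $P$ to the definition of $\mathcal{T}_{z_0}$ in \eqref{T map} isolates the $X$-component,
$$P\mathcal{T}_{z_0}(\varphi,x)(t) = e^{tA}x + \int_0^t e^{(t-s)A}\bigl[F(\varphi(s)+z(s,z_0))-F(z(s,z_0))\bigr]\,ds,$$
while $Q$ isolates the $Y$-component,
$$Q\mathcal{T}_{z_0}(\varphi,x)(t) = -\int_t^{\infty} e^{(t-s)B}\bigl[G(\varphi(s)+z(s,z_0))-G(z(s,z_0))\bigr]\,ds.$$

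For part (i), I would evaluate the $X$-component at $t=t_{i+1}$ and split the domain as $[0,t_{i-1}]\cup[t_{i-1},t_{i+1}]$. Writing $e^{t_{i+1}A}x = e^{(t_{i+1}-t_{i-1})A}e^{t_{i-1}A}x$ and factoring $e^{(t_{i+1}-t_{i-1})A}$ out of the integral over $[0,t_{i-1}]$ via the semigroup property, the bracketed piece is exactly $P\mathcal{T}_{z_0}(\varphi,x)(t_{i-1})$, leaving only the integral over $[t_{i-1},t_{i+1}]$ as claimed. The base cases $P\mathcal{T}_{z_0}(\varphi,x)(t_0)=x$ and the formula at $t_1$ are immediate from the definition, so the two-step recursion is well initialized.

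Part (ii) is the same idea run backwards in the time variable. I would evaluate the $Y$-component at $t=t_{i-1}$, split $[t_{i-1},\infty)$ as $[t_{i-1},t_{i+1}]\cup[t_{i+1},\infty)$, and use $e^{(t_{i-1}-s)B} = e^{(t_{i-1}-t_{i+1})B}e^{(t_{i+1}-s)B}$ to pull $e^{(t_{i-1}-t_{i+1})B}$ out of the second piece; the remainder equals $Q\mathcal{T}_{z_0}(\varphi,x)(t_{i+1})$. Tracking the single overall minus sign gives the stated formula. The base value at $t_N$ is just the defining tail integral on $[t_N,\infty)$, and the identity at $t_{N-1}$ follows from one further split $[t_{N-1},\infty)=[t_{N-1},t_N]\cup[t_N,\infty)$.

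There is no real obstacle: the computation is a bookkeeping exercise once the integrals are split. The only mild point worth checking is that the improper integral in the $Y$-component converges so that the semigroup factor can be pulled outside, which is guaranteed since $\varphi\in\mathcal{F}_{\sigma}$ from \eqref{def Fsigma}, $H$ is Lipschitz, and the bound $\|e^{-Bt}\|\leq e^{-\beta t}$ together with $\sigma<\beta-\delta$ yields exponential decay of the integrand. Care should also be taken with signs and with the fact that the recursion in (ii) runs from $i=N-1$ downward, which is why the two terminal values at $t_N$ and $t_{N-1}$ must both be supplied as initial data.
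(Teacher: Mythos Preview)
Your argument is correct and is essentially the same approach the paper has in mind: the paper does not write out a proof but simply refers to the derivation of the analogous recursive algorithm in \cite{center_manifold}, which proceeds exactly by splitting the integrals at intermediate mesh points and using the semigroup property to factor out $e^{(t_{i+1}-t_{i-1})A}$ (respectively $e^{(t_{i-1}-t_{i+1})B}$). Your remarks on convergence of the tail integral and on the need for two initial values in each recursion are appropriate and complete the verification.
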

The proof of Proposition \ref{recursive relation} is similar to the derivation of the recursive algorithm in \cite{center_manifold}.

\subsubsection{The SIMP Algorithms}
The integrals appearing in the recursive relation can be better approximated by Simpson's rule.  In order to have a consistent order, one needs to adjust the first two approximations, i.e. $P\mathcal{T}_{z_0}(\varphi, x)(t_{0})$, $P\mathcal{T}_{z_0}(\varphi, x)(t_{1})$, $Q\mathcal{T}_{z_0}(\varphi, x)(t_{N})$, $Q\mathcal{T}_{z_0}(\varphi, x)(t_{N-1})$.   
 
Since $P\mathcal{T}_{z_0}(\varphi, x)(t_{0}) = x$, no error is introduced at $t=t_0$.  When $t=t_1$, \begin{equation*} P\mathcal{T}_{z_0}(\varphi, x)(t_{1})=e^{t_1A}x + \int_0^{t_1} e^{(t_1-s)A}[F(\varphi(s)+z(s,z_0))-F(z(s,z_0))]ds,\end{equation*}  which involves the integral we need to approximate so that the error is the same as the one introduced by Simpson's rule.  One can rewrite the integral as
\begin{equation*}
\int_{t_0}^{t_1} e^{(t_1-s)A}[F(\varphi(s)+z(s,z_0))-F(z(s,z_0))]\;ds
\end{equation*}
\begin{align}
\label{38 Simpson integral}
&=\int_{t_0}^{t_3} e^{(t_1-s)A}[F(\varphi(s)+z(s,z_0))-F(z(s,z_0))]\;ds\\
\label{Simpson integral}
&-\int_{t_1}^{t_3} e^{(t_1-s)A}[F(\varphi(s)+z(s,z_0))-F(z(s,z_0))]\;ds
\end{align}
For the integral (\ref{38 Simpson integral}), Simpson's $3/8$ rule is used. It has the same order error as the classic form of Simpson's rule, which is used for the integral (\ref{Simpson integral}).%  Therefore, the error for $P\mathcal{T}_{z_0}(\varphi, x)(t_{1})$ is the same order as that for Simpson's rule.

The term $Q\mathcal{T}_{z_0}(\varphi, x)(t_{N})$ is the tail of the convergent improper integral.  We will give an estimate on $t_N$ so that the truncation error will be the same as the error of Simpson's rule.  To do so, we need an estimate on $\varphi$, the fixed point of $\mathcal{T}_{z_0}(\varphi, x)$.
\begin{lem}
Let $\varphi$ be the fixed point of $\mathcal{T}_{z_0}$.  Then $\|\varphi\|_{\sigma} \leq \frac{\|x\|}{1-\kappa}$, where $\kappa=max\{\frac{\delta}{\beta-\sigma}, \frac{\delta}{\sigma-\alpha}\}$.
\end{lem}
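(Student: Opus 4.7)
The plan is to use the fixed point equation $\varphi = \mathcal{T}_{z_0}(\varphi, x)$ together with the standard Lyapunov--Perron estimates on the weighted norm $\|\cdot\|_\sigma$. The key structural observation is that the first two terms of $\mathcal{T}_{z_0}$ live in $X$ and the third lives in $Y$, so after applying the projectors $P$ and $Q$ the estimate splits cleanly, and the max norm on $Z$ recombines the two contributions without adding them.

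First I would split
\begin{align*}
P\varphi(t) &= e^{tA}x + \int_0^t e^{(t-s)A}\bigl[F(\varphi(s)+z(s,z_0))-F(z(s,z_0))\bigr]\,ds, \\
Q\varphi(t) &= -\int_t^{\infty} e^{(t-s)B}\bigl[G(\varphi(s)+z(s,z_0))-G(z(s,z_0))\bigr]\,ds,
\end{align*}
and bound each piece. For the $X$-component, the semigroup bound $\|e^{tA}\|\leq e^{\alpha t}$ and $\sigma>\alpha$ give $e^{-\sigma t}\|e^{tA}x\|\leq\|x\|$; for the integral, the Lipschitz assumption $\mathrm{Lip}(H)\leq\delta$ (which forces $\mathrm{Lip}(F)\leq\delta$ in the max norm) combined with $\|\varphi(s)\|\leq e^{\sigma s}\|\varphi\|_\sigma$ yields
\[
e^{-\sigma t}\left\|\int_0^t e^{(t-s)A}[F(\varphi(s)+z(s,z_0))-F(z(s,z_0))]\,ds\right\|\leq \delta\|\varphi\|_\sigma\, e^{(\alpha-\sigma)t}\int_0^t e^{(\sigma-\alpha)s}\,ds\leq \frac{\delta}{\sigma-\alpha}\|\varphi\|_\sigma.
\]
For the $Y$-component I would use $\|e^{(t-s)B}\|=\|e^{-B(s-t)}\|\leq e^{-\beta(s-t)}$ for $s\geq t$ together with the same growth bound on $\varphi$; since $\sigma<\beta$ the improper integral converges and produces
\[
e^{-\sigma t}\|Q\varphi(t)\|\leq \delta\|\varphi\|_\sigma\, e^{(\beta-\sigma)t}\int_t^\infty e^{(\sigma-\beta)s}\,ds=\frac{\delta}{\beta-\sigma}\|\varphi\|_\sigma.
\]

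Finally, using $\|\varphi(t)\|=\max\{\|P\varphi(t)\|,\|Q\varphi(t)\|\}$, taking supremum over $t\geq 0$, and absorbing the two integral bounds into $\kappa=\max\{\delta/(\beta-\sigma),\delta/(\sigma-\alpha)\}$, I arrive at
\[
\|\varphi\|_\sigma \leq \|x\| + \kappa\|\varphi\|_\sigma.
\]
The spectral gap condition $2\delta<\beta-\alpha$ with $\sigma\in(\alpha+\delta,\beta-\delta)$ ensures $\kappa<1$, so rearranging gives the claimed bound $\|\varphi\|_\sigma\leq\|x\|/(1-\kappa)$. The only subtle point, and not really an obstacle, is making sure to handle the $X$- and $Y$-components separately so that the bound is a max rather than a sum of the two contractive coefficients; everything else is a routine Gronwall-style integral estimate.
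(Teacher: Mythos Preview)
Your argument is correct and is essentially the same approach as the paper's: both arrive at the key inequality $\|\varphi\|_\sigma \leq \|x\| + \kappa\|\varphi\|_\sigma$ and then rearrange using $\kappa<1$. The only difference is that the paper simply quotes this inequality from \cite{foliation}, whereas you derive it from scratch via the $P/Q$ splitting and the standard Lyapunov--Perron integral estimates; your write-up is thus a self-contained version of the same proof.
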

\begin{proof}
From \cite{foliation}, we have \begin{equation} \label{bound for phi} \|\mathcal{T}_{z_0}(\varphi)\|_{\sigma} \leq \|x\| + \kappa\|\varphi\|_{\sigma},  \end{equation}
for any $\varphi \in \mathcal{F}_{\sigma}$.  Since $\varphi$ is the fixed point of $\mathcal{T}_{z_0}$, the left hand side of \eqref{bound for phi} can be replaced by $\|\varphi\|_{\sigma}$.  By the gap condition \eqref{gap condition}, $\kappa<1$.  Thus, by direct calculation, one has $$\|\varphi\|_{\sigma} \leq \frac{\|x\|}{1-\kappa}.$$
\end{proof}

We are ready to give an estimate for $Q\mathcal{T}_{z_0}(\varphi, x)(t_{N})$.  Note that
\begin{align*}
\|Q\mathcal{T}_{z_0}(\varphi, x)(t_{N})\| &= \| \int_{t_N}^{\infty} e^{(t_N-s)B}[G(\varphi(s)+z(s,z_0))-G(z(s,z_0))]\;ds\| \\
                                          &\leq \int_{t_N}^{\infty}e^{(t_N-s)\beta}\delta|\varphi(s)|\;ds \leq \int_{t_N}^{\infty}e^{(t_N-s)\beta}\delta e^{\sigma s}\|\varphi\|_{\sigma}\;ds\\
                                          &=\delta\|\varphi\|_{\sigma}e^{t_N \beta}\int_{t_N}^{\infty}e^{(\sigma-\beta)s}\;ds
                                          =\frac{\delta}{\beta-\sigma}e^{\sigma t_N}\|\varphi\|_{\sigma}\\
                                                                             &\leq \frac{\delta}{\beta-\sigma}e^{\sigma t_N} \frac{\|x\|}{1-\kappa}
                                                                            \leq \frac{\kappa}{1-\kappa}e^{\sigma t_N}\|x\|.
\end{align*}
Let $h=t_1-t_0$.  Since the error for Simpson's rule is of order $\mathcal{O}(h^5)$, we take $$ \frac{\kappa}{1-\kappa}e^{\sigma t_N}\|x\| \leq h^5.$$ Hence, we choose $t_N$ so that \begin{equation} \label{condition on N}  e^{\sigma t_N} \leq \frac{1-\kappa}{\kappa\|x\|} h^5. \end{equation}
For $Q\mathcal{T}_{z_0}(\varphi, x)(t_{N-1})$, the technique is similar to the one used for $P\mathcal{T}_{z_0}(\varphi, x)(t_{1})$.  Rewrite $Q\mathcal{T}_{z_0}(\varphi, x)(t_{N-1})$ as two integrals.
\begin{equation*}
\int_{t_{N-1}}^{t_N} e^{(t_{N-1}-s)B}[G(\varphi(s)+z(s,z_0))-G(z(s,z_0))]ds
\end{equation*}
\begin{align}
\label{38 Simpson integral Q}
&=\int_{t_{N-3}}^{t_N} e^{(t_{N-1}-s)B}[G(\varphi(s)+z(s,z_0))-G(z(s,z_0))]ds\\
\label{Simpson integral Q}
&-\int_{t_{N-3}}^{t_{N-1}} e^{(t_{N-1}-s)B}[G(\varphi(s)+z(s,z_0))-G(z(s,z_0))]ds
\end{align}
As before, the integral (\ref{38 Simpson integral Q}) can be approximated by Simpson's $3/8$ rule; the integral (\ref{Simpson integral Q}) can be approximated by Simpson rule.  Hence, the error for $Q\mathcal{T}_{z_0}(\varphi, x)(t_{N-1})$ is of order $\mathcal{O}(h^5)$.

Before we present the algorithm, we remark on successive iteration.  We have to ensure that the initial guess, $\varphi^0$, is in the space $\mathcal{F}_{\sigma}$ defined in \ref{def Fsigma}.  In the PWCONST algorithm, $\psi^0$ is simply a constant function, which is in the space $\mathcal{G}_{\sigma}$ (defined in \ref{def Gsigma}) if we may take $\sigma<0$ (i.e. if $\alpha+\delta < 0 < \beta-\delta$).  However, in the case of the stable foliation, if $\varphi^0$ is constant, it may not be in $\mathcal{F}_{\sigma}$.  Since $\|\varphi^0\|_{\sigma}=\sup_{t\geq0} e^{-\sigma t} \|\varphi^0(t)\| $, $\|\varphi^0\|_{\sigma}$ is not bounded if $\sigma$ is negative.  Instead of constant $\varphi^0$, we take $$\varphi^0(x,t) = e^{\alpha t} (x-x_0).$$ Then 
\begin{equation*}
\|\varphi^0\|_{\sigma}=\sup_{t \geq 0} e^{-\sigma t} \|\varphi^0(t)\|=\sup_{t \geq 0} e^{(\alpha - \sigma)t} \|x-x_0\|<\infty
\end{equation*}
since $(\alpha - \sigma)$ is negative by the gap condition \eqref{gap condition}, where $\sigma \in (\alpha+\delta, \beta-\delta)$.

\begin{algorithm}
\caption*{SIMP algorithm}
%\label{without Gauss-Sidel}
\begin{algorithmic}[1]
\REQUIRE $z_0=(x_0,y_0)$, $x$, $h$, the step size, and $J$, the number of iterations.
\ENSURE $\Phi_{z_0}(x)$
\STATE{Choose $t_N$ by (\ref{condition on N}) and $t_i=i\times h$ for $i=0,\;,1,\;,\dots,\;N$.}
\STATE{$\varphi^0(x,t_i) = (x - x_0)e^{\alpha t_i}$.}
\STATE{Compute the solution of the ODE, $z(t_i,z_0)$ for $i=0,\;,1,\;,\dots,\;N$ by some ode solver.}
\STATE{Evaluate $F_i := F(z(t_i,z_0))$ and $G_i := G(z(t_i,z_0))$ for $i=0,\;,1,\;,\dots,\;N$.}
\FOR{$j=0 \to J-1$}
	\STATE{$F^j_i := F(\varphi^j(t_i)+z(t_i,z_0))-F_i$, for $i=0,\;,1,\;,\dots,\;N$}
	\STATE{$G^j_i := G(\varphi^j(t_i)+z(t_i,z_0))-G_i$, for $i=0,\;,1,\;,\dots,\;N$}
	\STATE{$P\mathcal{T}_{z_0}(\varphi^{j+1}, x)(t_{0})=x-x_0$}
	\STATE{$P\mathcal{T}_{z_0}(\varphi^{j+1}, x)(t_{1})=$\\$e^{t_1A}(x-x_0)+S38(e^{(t_1-t_k)A}F^j_k;k=0,1,2,3)-S(e^{(t_1-t_k)A}F^j_k;k=1,2,3)$}
	\STATE{$Q\mathcal{T}_{z_0}(\varphi^{j+1}, x)(t_{N})=0$}
	\STATE{$Q\mathcal{T}_{z_0}(\varphi^{j+1}, x)(t_{N-1})=$\\$-S38(e^{(t_{N-1}-t_k)A}G^j_k;k=N-3,N-2,N-1,N)+$\\$S(e^{(t_{N-1}-t_k)A}G^j_k;k=N-3,N-2,N-1)$}
	\FOR{$i=1 \to N-1$}
		\STATE{$P\mathcal{T}_{z_0}(\varphi^{j+1}, x)(t_{i+1})=$\\$e^{(t_{i+1}-t_{i-1})A}P\mathcal{T}_{z_0}(\varphi^j, x)(t_{i-1})+S(e^{(t_{i+1}-t_k)A}F^j_k;k=i-1,i,i+1)$}
		\STATE{$Q\mathcal{T}_{z_0}(\varphi^{j+1}, x)(t_{N-i-1})=$\\$e^{(t_{N-i-1}-t_{N-i+1})A}Q\mathcal{T}_{z_0}(\varphi^j, x)(t_{N-i+1})-$\\$S(e^{(t_{N-i-1}-t_k)A}G^j_k;k=N-i-1,N-i,N-i+1)$}
	\ENDFOR
\ENDFOR
\STATE{$\Phi_{z_0}(x) = y_0 + Q\mathcal{T}_{z_0}(\varphi^j, x)(0)$}
\end{algorithmic}
\end{algorithm}

For the notations in the SIMP algorithm,  \begin{align*}
S(f_k;k=i,i+1,i+2) &:= \frac{h}{3}(f_i + 4f_{i+1} + f_{i+2}), \\ S38(f_k;k=i,i+1,i+2,i+3) &:= \frac{3}{8}h(f_i + 3f_{i+1} + 3f_{i+2} + f_{i+3}),\end{align*} where $h=t_{i+1}-t_{i}$.  In particular for Simpson's rule in the $i$-loop, we have
\begin{align*}
&S(e^{(t_{i+1}-t_k)A}F^j_k;k=i-1,i,i+1)\\
&=\frac{h}{3}[e^{(t_{i+1}-t_{i-1})A}(F(\varphi^j(t_{i-1})+z(t_{i-1},z_0))-F(z(t_{i-1},z_0)))+\\
&\;4e^{(t_{i+1}-t_{i})A}(F(\varphi^j(t_{i})+z(t_{i},z_0))-F(z(t_{i},z_0)))+\\
&\;e^{(t_{i+1}-t_{i+1})A}(F(\varphi^j(t_{i+1})+z(t_{i+1},z_0))-F(z(t_{i+1},z_0)))].
\end{align*}
Since $P\mathcal{T}_{z_0}(\varphi^{j+1}, x)(t_{i-1})$ and $P\mathcal{T}_{z_0}(\varphi^{j+1}, x)(t_{i})$ are computed in the previous steps, we could, as in Gauss-Seidel iteration, use these to obtain a better approximation.  More precisely, we do the following:
\begin{align*}
&S(e^{(t_{i+1}-t_k)A}F^j_k;k=i-1,i,i+1)\\
&=\frac{h}{3}[e^{(t_{i+1}-t_{i-1})A}(F(P\varphi^{j+1}(t_{i-1})+x(t_{i-1},z_0),Q\varphi^{j}(t_{i-1})+y(t_{i-1},z_0))-F(z(t_{i-1},z_0)))+ \\
&\;4e^{(t_{i+1}-t_{i})A}(F(P\varphi^{j+1}(t_{i})+x(t_{i},z_0),Q\varphi^{j}(t_{i})+y(t_{i},z_0))-F(z(t_{i},z_0)))+\\
&\;e^{(t_{i+1}-t_{i+1})A}(F(\varphi^j(t_{i+1})+z(t_{i+1},z_0))-F(z(t_{i+1},z_0)))].
\end{align*}
A similar partial update can be done in reverse for $$S(e^{(t_{N-i-1}-t_k)A}G^j_k;k=N-i-1,N-i,N-i+1).$$  We call the resulting algorithm SIMPGS.

\subsection{Numerical results for the Test problem}

We apply PWCONST, PWCONST+Aitken, and SIMP to compute a leaf in the stable foliation for the test problem with $p=10$.  We take as the inputs for the algorithm \begin{equation*} (x_0,y_0)=T(1,1)=(1+\frac{1}{10\sqrt{2}},1+\frac{1}{10}tan^{-1}(1+\frac{1}{10\sqrt{2}}))\end{equation*} and $x=3+\frac{1}{10\sqrt{2}}$. The output should be \begin{equation*}y=1+\frac{tan^{-1}(x)}{10}=1+\frac{tan^{-1}(3+\frac{1}{10\sqrt{2}})}{10}. \end{equation*} 

In Figure \ref{foliation for toy}(B), the error of the original sequence decreases roughly by a factor of $1/2$ as $j$ increases; the error of the Aitken's sequence decreases roughly by a factor of $10^{-1}$.  In Figure \ref{foliation for toy}(C), observe that first, both algorithms converge and both errors are of the same order.  Comparing the two columns, there is a huge difference in the first two iterations.  In 1 or 2 iterations, the errors from SIMPGS seem to be saturated while errors from SIMP are saturated in 5 or 6 iterations.  This is typical. Different $h$ and different inputs give similar results.

\section{Improved Computation of Inertial Manifolds}
Both algorithms discussed in the previous section can be adapted to compute an inertial manifold.  In this section, we will show by the numerical evidence that these are the improved methods and we will apply those methods to the Kuramoto-Sivashinsky equation (KSE).  

The KSE with periodic and odd boundary conditions can be written
\begin{align}
\label{KSE}
 &\frac{\partial u}{\partial t} +  4\frac{\partial^4 u}{\partial\xi^4} + \gamma[\frac{\partial^2 u}{\partial \xi^2} + u\frac{\partial u}{\partial\xi}] = 0, \\
       & u(t,\xi) = u(t, \xi + 2\pi),\quad u(t,-\xi) = -u(t,\xi).
\end{align}
The solutions may be represented by the Fourier sine series
$$u(t,\xi) = \sum_{j=-\infty}^{\infty} u_j(t)e^{ij\xi} = \sum_{j=1}^{\infty}b_j(t)sin(j\xi),$$
where the reality and oddness conditions on $u$ give
$$u_{-j}=\bar {u_j} \text{ and } u_j = -\frac{ib_j}{2}.$$
For the tests in this section, we use a $16$ mode Galerkin approximation and take $dim(X)=dim(Y)=8$ and $\gamma=32$.

It has been shown that KSE (\ref{KSE}) has an inertial manifold (see \cite{inertialKSE}) and its lowest dimension has been studied in \cite{inertialKSE}, \cite{dim_KSE_inertial_Wang}, and \cite{dim_KSE_inertial}.  In particular at $\gamma=32$, it is shown in \cite{dim_KSE_inertial} that the {\it computed} global attractor is contained in a ball of radius $15$ (in the $L^2$-norm) which when used in the preparation in \eqref{prepared equation} yields an inertial manifold of dimension five.  In what follows we take $dim(X)=dim(Y)=8$, as the larger gap at this splittinbg leads to more rapid convergence for all methods.  Since a limit cycle is contained in the global attractor, which in turn is on the inertial manifold, we pick a test point,  $u_0=x_0+y_0$, on a limit cycle and pick  the low mode component, $y_0$, as an input of the algorithm and test how well we recover the high modes, $x_0$.

\subsection{PWCONST plus Aitken's Acceleration}

As we mentioned before, since the rate of convergence is linear in PWCONST, we can apply Aitken's $\Delta^2$ process to accelerate the convergence.
\begin{figure}[h!]
\subcaptionbox{Performances of PWCONST and PWCONST + Aitken's acceleration.}{
\begin{tabular} {|l|l|l|}
\hline
$j$	&	$x_0^j$	&	$\mathcal{A}x_0^j$   \\ \hline
1	&	0.616E-4	&	 N/A \\ \hline
2	&	0.321E-4	&	 N/A \\ \hline
3	&	0.146E-4	&	 N/A \\ \hline
4	&	0.707E-5	&	 N/A \\ \hline
5	&	0.348E-5	&	 N/A \\ \hline
6	&	0.173E-5	&	N/A  \\ \hline
7	&	0.862E-6	&	N/A  \\ \hline
8	&	0.430E-6	&	N/A  \\ \hline
9	&	0.214E-6	&	1.595E-11  \\ \hline
10	&	0.107E-6	&	1.629E-11  \\ \hline
11	&	0.537E-7	&	1.627E-11  \\ \hline
12	&	0.268E-7	&	1.627E-11  \\ \hline
13	&	0.134E-7	&	1.627E-11  \\ \hline
\end {tabular} }\qquad
\subcaptionbox{Performance of the SIMP algorithm.}{
\includegraphics[width=2.5 in]{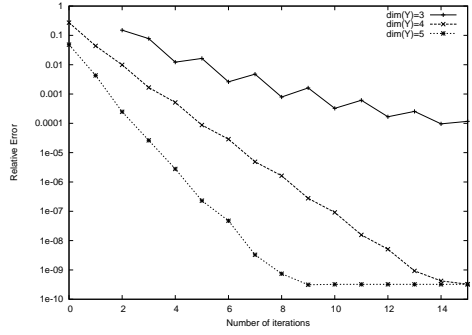}
}
\caption{Improved computation of the inertial manifold for the 16 mode Galerkin approximation of KSE with $\gamma=32$.  (A) Absolute error table for two algorithms.  $x_0^j$ ($\mathcal{A}x_0^j$) is generated by PWCONST (PWCONST+Aitken).    (B) We fix $(h,N)=(1e-6 ,100000)$ and vary $dim(Y)$.}
\label{improved inertial}
\end{figure}
The reason there are only 5 iterations in the Aitken's sequence  in Figure \ref{improved inertial}(A) is that in calculating one iteration in the Aitken's sequence, one needs $n+1$ iterations from the original sequence, where $n$ is the dimension of the elements of the sequence.  The error of Aitken's sequence is much better than the original sequence.  Another advantage of Aitken's $\Delta^2$ process is the cheap computation.  In order to obtain the similar error with the original sequence, one needs to compute more iterations, which is more expensive than computing the Aitken's sequence.  To observe the difference between these computational efforts, we compare the number of multiplications for the two algorithms, namely PWCONST and the Aitken's sequence ignoring multiplications needed for the evolution of nonlinear terms and the integrals.  For the PWCONST algorithm, the required number of multiplication is at least \begin{equation*} \Sigma_{j=1}^{J} j \times 2^j \times 6 \times dim(Z)\;. \end{equation*}  In this case, $dim(Z)=16$.  If we compute 10 terms in the PWCONST algorithm, the minimum number of multiplications is \begin{equation*} \Sigma_{j=1}^{10} j \times 2^j \times 6 \times dim(Z) = 1769664\;. \end{equation*}  On the other hand, to compute one term of the Aitken's sequence, one needs to compute $dim(X)$ terms in the PWCONST algorithm, solve a linear system, and calculate a matrix-vector multiplication.  Thus, the number of multiplications for computing two terms of the Aitken's sequence is \begin{equation}
 \sum_{j=1}^{dim(X)+1} j \times 2^j \times 6 \times dim(Z) + (dim(X)^3 + dim(X)^2) \times 2 = 787776. \end{equation}  

\subsection{The SIMP Algorithm}
To compute the inertial manifold and a leaf through $z_0$ in the stable foliation is to compute the fixed point of $\mathcal{U}$ in \eqref{U map} and $\mathcal{T}_{z_0}$ in \eqref{T map} on the Banach space $\mathcal{G}_{\sigma}$ in \eqref{def Gsigma} and $\mathcal{F}_{\sigma}$ in \eqref{def Fsigma} respectively.  These two maps are similar and in fact, if we formally replace $(z_0, \varphi, t, x, A, B, F, G)$ in the $\mathcal{T}$ map by $(0, \psi,-t,y,B,A,G,F)$ and drop the $z(s,z_0)$ terms,  we obtain the $\mathcal{U}$ map.  With this observation, we can easily modify the previous algorithms to compute the fixed point of the $\mathcal{U}$ map.  

There are two parameters in SIMP, namely step size, $h$, and the number of points, $N$.  Given $h$, one can choose $N$ by (\ref{condition on N}), though this requires an estimate on the Lipschitz constant of the nonlinear term, such can be found in \cite{dim_KSE_inertial}.  Here, we will choose $h$ and $N$ experimentally.  

Next, we investigate how the spectral gap affects the algorithm.  For the KSE, since the linear term is a diagonal matrix, the gap is the difference between two consecutive eigenvalues.

When $dim(Y)=1$ and $dim(Y)=2$, the sequences are convergent but they are not convergent to the inertial manifold.  When $dim(Y)=3$, the convergence is slow; it took about $50$ iterations to reach the saturated error, which is about $10^{-8}$.  The reason could be that the gap condition is barely satisfied for the region of phase space visited by the algorithm.  As we increase $\dim(Y)$ and hence the gap, the convergence is faster as Figure \ref{improved inertial}(B) shows.

\section{Computation of Tracking Initial Conditions}
\label{computing tracking initial condition}
\subsection{Algorithm for tracking initial condition}
\label{section algorithm}
It is shown in \cite{foliation} that the exact tracking initial condition of a base point $(x_1,y_1)$ is the fixed point for the mapping
\begin{equation}
\Sigma:\;(x,y) \mapsto(\Psi_{x_1}(y), \Phi_{y_1}(x)) \;.
\end{equation}
    We will fix $j_1$ and $j_2$ and iterate the map \begin{equation} \Sigma^{j_1,j_2}:\;(x,y)\mapsto (\Psi^{j_1}{_{x_1}}(y), \Phi^{j_2}_{y_1}(x)). \end{equation}  As is the case for $\Sigma$,  $\Sigma^{j_1,j_2}$ is a contraction mapping and the fixed point, $(x^*, y^*)$, is the intersection of the two manifolds that are the graphs of $\Psi^{j_1}{_{x_1}}$ and $\Phi^{j_2}_{y_1}(x)$.  

\subsection{Convergence of the algorithm}
The following results show that the algorithm for the tracking initial condition converges under a stronger gap condition.
\begin{lem}
\label{lemma phi}
$\|\varphi^j(x_1) - \varphi^j(x_2)\|_{\sigma} \leq \frac{1}{1-\kappa} \|x_1 - x_2\|$, where $\kappa=max\{\frac{\delta}{\beta-\sigma}, \frac{\delta}{\sigma-\alpha}\}$ and \\ $\|\varphi\|_{\sigma} := \sup_{t\geq0}e^{-\sigma t}\|\varphi(t)\|$.
\end{lem}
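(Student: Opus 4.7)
The plan is to proceed by induction on $j$, where $\varphi^j(x) := \mathcal{T}_{z_0}(\varphi^{j-1}(x), x)$ is the $j$-th iterate of the contraction \eqref{T map} started from the initial guess $\varphi^0(x,t) = e^{\alpha t}(x-x_0)$ specified just before the SIMP algorithm. The crux will be a one-step recurrence $a_j \leq \|x_1 - x_2\| + \kappa\, a_{j-1}$, where $a_j := \|\varphi^j(x_1) - \varphi^j(x_2)\|_{\sigma}$, followed by a geometric-series estimate to give the stated bound. For the base case, $\varphi^0(x_1)(t) - \varphi^0(x_2)(t) = e^{\alpha t}(x_1 - x_2)$ and $\alpha < \sigma$, so
\begin{equation*}
a_0 = \sup_{t\geq 0} e^{(\alpha - \sigma)t}\|x_1 - x_2\| = \|x_1-x_2\| \leq \tfrac{1}{1-\kappa}\|x_1-x_2\|.
\end{equation*}

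For the inductive step, I would subtract the two copies of \eqref{T map} evaluated at $x = x_1$ and $x = x_2$. The $z(s,z_0)$ arguments inside $F$ and $G$ cancel, leaving a difference whose $X$- and $Y$-components I estimate separately --- this is the step that recovers a \emph{max} rather than a sum at the end. Using $\|e^{At}\|\leq e^{\alpha t}$ for $t\geq 0$, the semigroup bound $\|e^{B(t-s)}\| \leq e^{\beta(t-s)}$ for $t \leq s$ (immediate from the second standing assumption), together with $\mathrm{Lip}(F),\mathrm{Lip}(G) \leq \delta$ and $\|\varphi^{j-1}(x_1)(s) - \varphi^{j-1}(x_2)(s)\| \leq e^{\sigma s} a_{j-1}$, the $X$-component is controlled by
\begin{equation*}
e^{\alpha t}\|x_1-x_2\| + \delta\,a_{j-1}\!\int_0^t e^{\alpha(t-s)}e^{\sigma s}\,ds,
\end{equation*}
and the $Y$-component by $\delta\,a_{j-1}\int_t^\infty e^{\beta(t-s)}e^{\sigma s}\,ds$. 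These integrals evaluate in closed form to $(e^{\sigma t}-e^{\alpha t})/(\sigma-\alpha)$ and $e^{\sigma t}/(\beta-\sigma)$, respectively. Multiplying through by $e^{-\sigma t}$, using $\alpha < \sigma < \beta$ to bound the remaining exponential factors by one, and then taking $\sup_{t\geq 0}$ of the \emph{max} of the two components yields
\begin{equation*}
a_j \leq \|x_1-x_2\| + \max\!\bigl\{\tfrac{\delta}{\sigma-\alpha},\,\tfrac{\delta}{\beta-\sigma}\bigr\}\,a_{j-1} = \|x_1-x_2\| + \kappa\,a_{j-1}.
\end{equation*}

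Iterating this recurrence from the base case gives $a_j \leq \|x_1-x_2\|\sum_{k=0}^{j}\kappa^k \leq \|x_1-x_2\|/(1-\kappa)$, since the gap condition \eqref{gap condition} forces $\delta/(\sigma-\alpha),\delta/(\beta-\sigma) < 1$ and hence $\kappa < 1$. The only real subtlety is retaining the $\max$-structure of the $Z$-norm throughout the estimate; if one instead summed the $X$- and $Y$-bounds, the resulting constant would be $\tfrac{\delta}{\sigma-\alpha}+\tfrac{\delta}{\beta-\sigma}$, strictly larger than the stated $\kappa$. Beyond that bookkeeping point, the argument parallels the fixed-point estimate \eqref{bound for phi} used in the preceding lemma.
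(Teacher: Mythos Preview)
Your proposal is correct and follows essentially the same approach as the paper: both derive the one-step recurrence $\|\varphi^j(x_1)-\varphi^j(x_2)\|_\sigma \le \|x_1-x_2\| + \kappa\,\|\varphi^{j-1}(x_1)-\varphi^{j-1}(x_2)\|_\sigma$ by subtracting the two copies of \eqref{T map}, bounding the $X$- and $Y$-components separately via the semigroup and Lipschitz estimates (retaining the $\max$-structure of the $Z$-norm), then iterating down to $\varphi^0$ and summing the resulting geometric series. The paper presents the unrolling explicitly rather than framing it as induction, but the argument is the same.
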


\begin{proof}

For $j \geq 1$, define $\varphi^{j}$ recursively by \begin{equation*}\varphi^{j}(x)(t) = \mathcal{T}_{z_0}(\varphi^{j-1}, x)(t),\;\text{where}\; \varphi^0(x)(t)=e^{\alpha t}(x-x_0),\;\forall t\in [0,\infty).\end{equation*}

Now, let $t\in [0,\infty)$ and consider
\begin{align*}
&\|\varphi^j(x_1)(t) - \varphi^j(x_2)(t)\|= \|\mathcal{T}_{z_0}(\varphi^{j-1}, x_1)(t)-\mathcal{T}_{z_0}(\varphi^{j-1}, x_2)(t)\| \\
= &\|e^{tA}(x_1-x_2) +  \int_0^t e^{(t-s)A}[F(\varphi^{j-1}(x_1)(s)+z(s,z_0))-F(\varphi^{j-1}(x_2)(s)+z(s,z_0))]ds \\
  &-\int_t^{\infty} e^{(t-s)B}[G(\varphi^{j-1}(x_1)(s)+z(s,z_0))-G(\varphi^{j-1}(x_2)(s)+z(s,z_0))]ds\|\\
\leq & max\{e^{\alpha t} \|x_1-x_2\| + \int_0^t e^{(t-s)\alpha}\delta\|\varphi^{j-1}(x_1)(s)-\varphi^{j-1}(x_2)(s)\|ds,\\
     & \int_t^{\infty} e^{(t-s)\beta}\delta\|\varphi^{j-1}(x_1)(s)-\varphi^{j-1}(x_2)(s)\|ds\}\\
\end{align*}
Let $\sigma\in(\alpha+\delta, \beta-\delta)$ and multiply the last inequality above by $e^{-\sigma t}$ to obtain:\\
$e^{-\sigma t}\|\varphi^j(x_1)(t) - \varphi^j(x_2)(t)\|$
\begin{align*}
\leq max\{&e^{(\alpha-\sigma) t}\|x_1-x_2\|+e^{-\sigma t}\int_0^t e^{(t-s)\alpha}\delta\|\varphi^{j-1}(x_1)(s)-\varphi^{j-1}(x_2)(s)\|ds,\\
          &e^{-\sigma t}\int_t^{\infty} e^{(t-s)\beta}\delta\|\varphi^{j-1}(x_1)(s)-\varphi^{j-1}(x_2)(s)\|ds\}\\
\leq max\{&e^{(\alpha-\sigma)t}\|x_1-x_2\|+\delta\|\varphi^{j-1}(x_1) - \varphi^{j-1}(x_2)\|_{\sigma}\int_0^t e^{(\alpha-\sigma)(t-s)}ds,\\
          &\delta\|\varphi^{j-1}(x_1) - \varphi^{j-1}(x_2)\|_{\sigma}\int_t^{\infty} e^{(\beta-\sigma)(t-s)}ds\}\\
= max\{& e^{(\alpha-\sigma)t}\|x_1-x_2\| + \frac{\delta}{\sigma-\alpha}\|\varphi^{j-1}(x_1) - \varphi^{j-1}(x_2)\|_{\sigma},\frac{\delta}{\beta-\sigma}\|\varphi^{j-1}(x_1) - \varphi^{j-1}(x_2)\|_{\sigma}\}\\
\leq \|x_1-&x_2\| + \kappa\|\varphi^{j-1}(x_1) - \varphi^{j-1}(x_2)\|_{\sigma} %(\alpha-\sigma<0\text{, due to gap condition and the choice of }\sigma)\\
\end{align*}
Now, since the right hand side of the inequality does not depend on $t$, take the supremum over $t$ and obtain
\begin{align*}
\|\varphi^{j}(x_1) - \varphi^{j}(x_2)\|_{\sigma} &\leq \|x_1-x_2\| + \kappa\|\varphi^{j-1}(x_1) - \varphi^{j-1}(x_2)\|_{\sigma} \\
                                                 &\leq \|x_1-x_2\| + \kappa\|x_1-x_2\|+\kappa^2\|\varphi^{j-2}(x_1) - \varphi^{j-2}(x_2)\|_{\sigma}\\
                                                 &\leq \|x_1-x_2\| + \kappa\|x_1-x_2\| + \ldots + \kappa^j \|\varphi^{0}(x_1) - \varphi^{0}(x_2)\|_{\sigma} \\
                                                 &=(1+\kappa+\kappa^2+\ldots+\kappa^j)\|x_1-x_2\|
                                                 \leq \frac{1}{1-\kappa} \|x_1-x_2\|. 
\end{align*}
\end{proof}

Since $\Phi_{z_0}(x) = y_0 + Q\varphi(0)$, we can obtain an estimate for $\Phi^{j}$.  For the exact manifold, from \cite{foliation} we have $Lip(\Phi_{z_0}) \leq \frac{\delta}{\beta-\alpha-\delta}$.
\begin{lem}
\label{lemma for Phi}
$$\|\Phi^{j}(x_1)-\Phi^{j}(x_2)\|\leq \frac{\delta}{\beta-\sigma}\frac{1}{1-\kappa}\|x_1-x_2\|.$$
\end{lem}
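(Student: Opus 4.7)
The plan is to combine the defining formula for $\Phi^{j}$ with the integral representation of $Q\mathcal{T}_{z_0}$ at $t=0$, and then invoke the preceding Lemma \ref{lemma phi}. Recall that $\Phi^{j}_{z_0}(x) = y_{0} + Q\varphi^{j}(x)(0)$ and that the $Y$-component of $\mathcal{T}_{z_0}$ evaluated at $t=0$ is
\begin{equation*}
Q\mathcal{T}_{z_0}(\varphi,x)(0) = -\int_{0}^{\infty} e^{-sB}\bigl[G(\varphi(s)+z(s,z_0)) - G(z(s,z_0))\bigr]\,ds.
\end{equation*}

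First I would write, using $\varphi^{j}(x) = \mathcal{T}_{z_0}(\varphi^{j-1}(x),x)$,
\begin{equation*}
\Phi^{j}(x_1) - \Phi^{j}(x_2) = -\int_{0}^{\infty} e^{-sB}\bigl[G(\varphi^{j-1}(x_1)(s)+z(s,z_0)) - G(\varphi^{j-1}(x_2)(s)+z(s,z_0))\bigr]\,ds.
\end{equation*}
Applying the norm bound $\|e^{-sB}\| \leq e^{-\beta s}$, the Lipschitz bound $\mathrm{Lip}(G) \leq \delta$, and the defining inequality $\|\varphi(s)\| \leq e^{\sigma s}\|\varphi\|_{\sigma}$ for elements of $\mathcal{F}_{\sigma}$, the integrand is majorized by $\delta\, e^{(\sigma-\beta)s}\|\varphi^{j-1}(x_1) - \varphi^{j-1}(x_2)\|_{\sigma}$. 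Since $\sigma < \beta - \delta < \beta$, the integral converges and evaluates to
\begin{equation*}
\|\Phi^{j}(x_1) - \Phi^{j}(x_2)\| \leq \frac{\delta}{\beta-\sigma}\,\|\varphi^{j-1}(x_1) - \varphi^{j-1}(x_2)\|_{\sigma}.
\end{equation*}

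Finally I would apply Lemma \ref{lemma phi} to the right-hand side, yielding the desired estimate
\begin{equation*}
\|\Phi^{j}(x_1) - \Phi^{j}(x_2)\| \leq \frac{\delta}{\beta-\sigma}\cdot\frac{1}{1-\kappa}\,\|x_1 - x_2\|.
\end{equation*}
There is no real obstacle here: the Lipschitz control on $Q\varphi^{j}(0)$ is strictly tighter than the weighted control on $\varphi^{j}$ as a function of $t$, because projecting to $Y$ and evaluating at $t=0$ lets us integrate the decay factor $e^{-\beta s}$ against $e^{\sigma s}$ rather than losing the exponent. The only point that requires care is the index book-keeping between $\Phi^{j}$ and $\varphi^{j-1}$; once the convention $\varphi^{j} = \mathcal{T}_{z_0}(\varphi^{j-1},x)$ is fixed, the argument is a direct three-line estimate.
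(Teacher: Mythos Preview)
Your proposal is correct and follows essentially the same route as the paper: express $\Phi^{j}(x_1)-\Phi^{j}(x_2)$ via the $Q$-component integral of $\mathcal{T}_{z_0}$ at $t=0$, bound the integrand using $\|e^{-sB}\|\le e^{-\beta s}$, $\mathrm{Lip}(G)\le\delta$, and $\|\varphi(s)\|\le e^{\sigma s}\|\varphi\|_{\sigma}$, integrate to obtain the factor $\delta/(\beta-\sigma)$, and then invoke Lemma~\ref{lemma phi}. Your handling of the index shift $\varphi^{j}=\mathcal{T}_{z_0}(\varphi^{j-1},x)$ matches the paper's convention exactly.
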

\begin{proof}
%Recall that
%\begin{align*}
%\Phi_{J}(x_1) &= y_0 + \text{ Y-comp. of }\varphi^J(t)|_{t=0} \\
 %             &= y_0 - \int_{0}^{\infty} e^{-sB}[G(\varphi^{J-1}(s)+z(s,z_0))-G(z(s,z_0))]ds.
%\end{align*}
Apply Lemma \ref{lemma phi} to find that
\begin{align*}
\|\Phi^{j}(x_1) - \Phi^{j}(x_2)\| &=\|\int_{0}^{\infty} e^{-sB}[G(\varphi^{j-1}(x_1)(s)+z(s,z_0))-G(\varphi^{j-1}(x_2)(s)+z(s,z_0))]ds \| \\
                                  &\leq \int_{0}^{\infty} e^{-s\beta} \delta \|\varphi^{j-1}(x_1)(s) - \varphi^{j-1}(x_2)(s)\|ds \\
                                  &=\int_{0}^{\infty} e^{-s\beta} e^{s\sigma}\delta e^{-s\sigma}\|\varphi^{j-1}(x_1)(s) - \varphi^{j-1}(x_2)(s)\|ds \\
                                  &\leq \delta\|\varphi^{j-1}(x_1)-\varphi^{j-1}(x_2)\|_{\sigma} \int_{0}^{\infty}e^{(\sigma-\beta)s}ds \\
                                  &= \frac{\delta}{\beta-\sigma} \|\varphi^{j-1}(x_1)-\varphi^{j-1}(x_2)\|_{\sigma} 
                                  \leq \frac{\delta}{\beta-\sigma}\frac{1}{1-\kappa}\|x_1-x_2\|.
\end{align*}
\end{proof} 
 
We have an analogous result for $\Psi$, the proof of which is similar.
\begin{lem}
\label{lemma for Psi}
$$\|\Psi^{j}(y_1)-\Psi^{j}(y_2)\|\leq \frac{\delta}{\sigma-\alpha}\frac{1}{1-\kappa}\|y_1-y_2\|.$$
\end{lem}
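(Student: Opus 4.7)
The plan is to mirror the two-step structure of the proof of Lemma \ref{lemma for Phi}, leveraging the formal duality between the $\mathcal{T}_{z_0}$-map on $\mathcal{F}_\sigma$ and the $\mathcal{U}$-map on $\mathcal{G}_\sigma$ noted at the start of Section~3.2. The first step is to establish the counterpart of Lemma \ref{lemma phi} for the iterates of $\mathcal{U}$:
\[
\|\psi^j(y_1)-\psi^j(y_2)\|_\sigma \leq \frac{1}{1-\kappa}\|y_1-y_2\|,
\]
where $\psi^j$ is generated by $\psi^{j}(y)(t)=\mathcal{U}(\psi^{j-1}(y),y)(t)$ from an initial guess such as $\psi^0(y)(t)=e^{tB}y$. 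I would derive this by writing the difference $\psi^j(y_1)(t)-\psi^j(y_2)(t)$, splitting into $X$- and $Y$-components supported on $(-\infty,t]$ and $[t,0]$ respectively, applying $\mathrm{Lip}(H)\leq\delta$, using the spectral bounds $\|e^{(t-s)A}\|\leq e^{\alpha(t-s)}$ and $\|e^{(t-s)B}\|\leq e^{\beta(t-s)}$ on the appropriate subintervals, and inserting the $\mathcal{G}_\sigma$-norm estimate for $\|\psi^{j-1}(y_1)-\psi^{j-1}(y_2)\|$. The $e^{tB}(y_1-y_2)$ term contributes $\|y_1-y_2\|$ after weighting, while the two integrals contribute factors $\delta/(\sigma-\alpha)$ and $\delta/(\beta-\sigma)$; taking the supremum over $t\leq 0$ produces the recursive inequality $\|\psi^j(y_1)-\psi^j(y_2)\|_\sigma\leq \|y_1-y_2\|+\kappa\|\psi^{j-1}(y_1)-\psi^{j-1}(y_2)\|_\sigma$, which telescopes via the geometric series in $\kappa$ starting from the trivial base bound on $\psi^0$.

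Given Step~1, the proof of the lemma itself is the analog of Lemma \ref{lemma for Phi}. At $t=0$ the $X$-component of the $\mathcal{U}$-map reduces to
\[
\Psi^j(y_1)-\Psi^j(y_2)=\int_{-\infty}^0 e^{-sA}\bigl[F(\psi^{j-1}(y_1)(s))-F(\psi^{j-1}(y_2)(s))\bigr]\,ds,
\]
since the $e^{0\cdot B}y$ term is annihilated by $P$ and the $\int_t^0 e^{(t-s)B}G\,ds$ integral collapses at $t=0$. Taking norms, using $\mathrm{Lip}(F)\leq\delta$, bounding $\|e^{-sA}\|\leq e^{-\alpha s}$ for $s\leq 0$, and inserting the $\mathcal{G}_\sigma$-estimate from Step~1 reduces the whole bound to a single exponential integral, which evaluates to $1/(\sigma-\alpha)$ because $\sigma>\alpha$ by the gap condition \eqref{gap condition}. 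Combining yields
\[
\|\Psi^j(y_1)-\Psi^j(y_2)\|\leq \frac{\delta}{\sigma-\alpha}\cdot\frac{1}{1-\kappa}\,\|y_1-y_2\|.
\]

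The main obstacle is bookkeeping rather than conceptual: relative to the proof of Lemma \ref{lemma for Phi}, the time domain flips to $(-\infty,0]$, the "interior" and "exterior" integrals exchange roles, and the sign of the weight in the $\mathcal{G}_\sigma$-norm is opposite that in $\mathcal{F}_\sigma$, so care is needed converting the $\|\psi(s)\|$-bound into the integrable quantity and making the substitution $u=-s$ consistently. Once one tracks which spectral bound ($\alpha$ versus $\beta$) governs which integral, every step in the proof of Lemma \ref{lemma for Phi} has a direct counterpart, and the final constant emerges with $\sigma-\alpha$ in place of $\beta-\sigma$, exactly as expected from the formal swap $(A,B,F,G,x,t)\leftrightarrow(B,A,G,F,y,-t)$.
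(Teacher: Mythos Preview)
Your proposal is correct and is precisely the analogous argument the paper has in mind: the paper gives no separate proof of Lemma~\ref{lemma for Psi}, stating only that ``the proof of which is similar'' to Lemma~\ref{lemma for Phi}, and your two-step outline (first the $\mathcal{G}_\sigma$-analogue of Lemma~\ref{lemma phi} for the $\mathcal{U}$-iterates, then the evaluation at $t=0$ via the $P$-component integral) is exactly that analogy carried out under the formal swap $(A,B,F,G,x,t)\leftrightarrow(B,A,G,F,y,-t)$ noted in Section~3.2. One small point to watch: for the telescoping step to terminate with $\|\psi^0(y_1)-\psi^0(y_2)\|_\sigma\le\|y_1-y_2\|$, the choice $\psi^0(y)(t)=e^{tB}y$ requires $\sup_{t\le 0}e^{(\sigma+\beta)t}\le 1$, i.e.\ $\sigma\ge -\beta$, which is not automatic from the gap condition alone but is easily arranged by the choice of $\sigma$ (and always holds in the inertial-manifold setting $\alpha+\delta<0<\beta-\delta$).
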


Since we now have both estimates for $\Phi^{j}$ and $\Psi^{j}$, ready to show  that $\Sigma^{j_1,j_2}$ is a contraction mapping.
\begin{prop}
\label{Lip for Sigma}
If \begin{equation}\label{stronger gap condition} 4\delta < \beta - \alpha, \end{equation} $\Sigma^{j_1,j_2}$ defined above is a contraction mapping with \begin{equation*}Lip(\Sigma^{j_1,j_2})\leq \frac{\kappa}{1-\kappa}. \end{equation*}
\end{prop}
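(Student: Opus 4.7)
The plan is to bound $\|\Sigma^{j_1,j_2}(x_1,y_1) - \Sigma^{j_1,j_2}(x_2,y_2)\|$ componentwise using Lemmas \ref{lemma for Phi} and \ref{lemma for Psi}, then verify that the stronger gap condition forces the resulting constant below one.

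First I would unfold the definition. Using the product norm $\|(x,y)\| = \max\{\|x\|,\|y\|\}$ on $Z$, one has
\begin{equation*}
\|\Sigma^{j_1,j_2}(x_1,y_1) - \Sigma^{j_1,j_2}(x_2,y_2)\|
  = \max\bigl\{\|\Psi^{j_1}_{x_1}(y_1)-\Psi^{j_1}_{x_1}(y_2)\|,\;\|\Phi^{j_2}_{y_1}(x_1)-\Phi^{j_2}_{y_1}(x_2)\|\bigr\}.
\end{equation*}
Applying Lemma \ref{lemma for Psi} to the first entry and Lemma \ref{lemma for Phi} to the second, the right-hand side is bounded by
\begin{equation*}
\max\!\left\{\frac{\delta}{\sigma-\alpha}\,\frac{1}{1-\kappa}\|y_1-y_2\|,\;\frac{\delta}{\beta-\sigma}\,\frac{1}{1-\kappa}\|x_1-x_2\|\right\}
  \leq \frac{\kappa}{1-\kappa}\,\max\{\|x_1-x_2\|,\|y_1-y_2\|\},
\end{equation*}
where in the last step I pulled out $\kappa = \max\{\delta/(\beta-\sigma),\delta/(\sigma-\alpha)\}$ from the definition. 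This already gives the Lipschitz estimate $\mathrm{Lip}(\Sigma^{j_1,j_2}) \le \kappa/(1-\kappa)$.

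The main (only mildly subtle) obstacle is showing that the constant $\kappa/(1-\kappa)$ is strictly less than one, i.e. that $\kappa < 1/2$, and tracing why this needs $4\delta < \beta-\alpha$ rather than the weaker spectral gap condition \eqref{gap condition}. Here I would optimize the free parameter $\sigma \in (\alpha+\delta,\beta-\delta)$. The quantity $\kappa(\sigma)$ is minimized when its two entries are balanced, namely $\sigma-\alpha = \beta - \sigma$, i.e.\ $\sigma = (\alpha+\beta)/2$. With this choice $\kappa = 2\delta/(\beta-\alpha)$, so the hypothesis \eqref{stronger gap condition} gives $\kappa < 1/2$ and therefore $\kappa/(1-\kappa) < 1$. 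One needs to also check that $\sigma = (\alpha+\beta)/2$ lies in the admissible interval $(\alpha+\delta,\beta-\delta)$, but this is immediate from $2\delta < \beta - \alpha$, which is weaker than \eqref{stronger gap condition}.

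Finally, since both $X$ and $Y$ are complete and $Z = X\times Y$ with the max norm inherits completeness, the Banach fixed point theorem applies and guarantees a unique fixed point of $\Sigma^{j_1,j_2}$, which by construction lies in the intersection of $\mathrm{graph}(\Psi^{j_1}_{x_1})$ and $\mathrm{graph}(\Phi^{j_2}_{y_1})$. No further estimates are needed, so the proof is essentially a one-line computation once Lemmas \ref{lemma for Phi} and \ref{lemma for Psi} are invoked.
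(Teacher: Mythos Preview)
Your proof is correct and follows essentially the same route as the paper: apply Lemmas \ref{lemma for Phi} and \ref{lemma for Psi} componentwise in the max norm to get the bound $\kappa/(1-\kappa)$, then take $\sigma=(\alpha+\beta)/2$ so that $\kappa=2\delta/(\beta-\alpha)<1/2$ under \eqref{stronger gap condition}. Your added checks (that this $\sigma$ lies in the admissible interval, and the concluding Banach fixed point remark) are fine but go slightly beyond what the proposition itself asserts.
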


\begin{proof}
By Lemma \ref{lemma for Phi} and \ref{lemma for Psi},
\begin{align*}
\|\Sigma^{j_1,j_2}(x_1,y_1)-\Sigma^{j_1,j_2}(x_2,y_2)\| &=\;max\{\|\Phi^{j}(x_1)-\Phi^{j}(x_2)\|, \|\Psi^{j}(y_1)-\Psi^{j}(y_2)\|\} \\
                                                        &\leq \frac{\kappa}{1-\kappa}\;max\{\|x_1-x_2\|,\|y_1-y_2\|\}. \\
\end{align*}
The condition $\frac{\kappa}{1-\kappa}<1$ is equivalent to $\kappa=\;max\{\frac{\delta}{\beta-\sigma},\frac{\delta}{\sigma-\alpha} \}<1/2$, i.e., $2\delta<\sigma-\alpha$ and $2\delta<\beta-\sigma$.  The last two inequalities are equivalent to \eqref{stronger gap condition} for $\sigma = \frac{\alpha+\beta}{2}$.  $\Box$
\end{proof}

\begin{prop}
$\Sigma^{j_1,j_2}$ has as its fixed point the intersection of the graphs of $\Psi^{j_1}$ and $\Phi^{j_2}$.
\end{prop}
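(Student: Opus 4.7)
The plan is to unpack the definitions of the graphs and of $\Sigma^{j_1,j_2}$ and observe that the fixed-point equation literally says that the point lies on both graphs simultaneously. Concretely, the graph of $\Psi^{j_1}_{x_1}\colon Y\to X$ is $\{(\Psi^{j_1}_{x_1}(y), y) : y\in Y\}$, and the graph of $\Phi^{j_2}_{y_1}\colon X\to Y$ is $\{(x, \Phi^{j_2}_{y_1}(x)) : x\in X\}$. A point $(x^*, y^*)$ is a fixed point of $\Sigma^{j_1,j_2}$ iff $x^* = \Psi^{j_1}_{x_1}(y^*)$ and $y^* = \Phi^{j_2}_{y_1}(x^*)$, which is precisely the statement that $(x^*, y^*)$ belongs to both graphs.

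First I would invoke Proposition \ref{Lip for Sigma} to guarantee the existence and uniqueness of a fixed point $(x^*, y^*)$ of $\Sigma^{j_1,j_2}$ under the strengthened gap condition \eqref{stronger gap condition}. The Banach fixed-point theorem applies directly because $Z = X\times Y$ with the max norm is complete and the Lipschitz bound $\kappa/(1-\kappa) < 1$ was just established.

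Next I would argue the two inclusions. For one direction, writing out $\Sigma^{j_1,j_2}(x^*, y^*) = (x^*, y^*)$ coordinatewise yields $x^* = \Psi^{j_1}_{x_1}(y^*)$, so $(x^*, y^*) = (\Psi^{j_1}_{x_1}(y^*), y^*) \in \operatorname{graph}(\Psi^{j_1}_{x_1})$, and similarly $(x^*, y^*) = (x^*, \Phi^{j_2}_{y_1}(x^*)) \in \operatorname{graph}(\Phi^{j_2}_{y_1})$. For the converse direction, any $(x,y)$ in the intersection satisfies $x = \Psi^{j_1}_{x_1}(y)$ and $y = \Phi^{j_2}_{y_1}(x)$ by definition of the two graphs, hence $\Sigma^{j_1,j_2}(x,y) = (x,y)$. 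The uniqueness part of the contraction argument then shows the intersection is a single point and coincides with $(x^*, y^*)$.

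There is no real obstacle here; the statement is essentially a tautological reformulation of the fixed-point equation once the contraction property is in hand. The only thing worth being careful about is making the symmetry between the roles of $X$ and $Y$ explicit in the graph identification (since $\Psi^{j_1}_{x_1}$ maps $Y\to X$ while $\Phi^{j_2}_{y_1}$ maps $X\to Y$), so that the two components of $\Sigma^{j_1,j_2}$ correspond to membership in the respective graphs without ambiguity.
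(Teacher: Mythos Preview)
Your argument is correct and is exactly the standard one: the fixed-point equation for $\Sigma^{j_1,j_2}$ is, coordinatewise, precisely the pair of graph conditions, and the contraction from Proposition~\ref{Lip for Sigma} gives existence and uniqueness. The paper does not spell this out but simply notes that the proof is identical to that for $\Sigma$ in \cite{foliation}, which is the same reasoning you have written.
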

The proof is as for $\Sigma$ in Castaneda Rosa \cite{foliation}.

\subsection{Error estimate}
Let $z_0^*$ be the fixed point of $\Sigma^{j_1,j_2}$, and $z_0^+$ the exact tracking initial condition (fixed point of $\Sigma$).  We seek an estimate for $\|z_{0}^{+} - z_0^{*}\|$.  We start with two lemmas.

\begin{lem}
Let $z_0\in Z$.  For any $x\in X$ and positive integer $j$, we have
\begin{equation*}\|\varphi(x)-\varphi^{j}(x)\|_{\sigma} \leq \frac{\kappa^j}{1-\kappa}\|\varphi^1(x)-\varphi^0(x)\|_{\sigma},\end{equation*} where $\varphi$ is the fixed point of $\mathcal{T}_{z_0}(\varphi, x)$ and $\kappa=max\{\frac{\delta}{\beta-\sigma}, \frac{\delta}{\sigma-\alpha}\}$.
\end{lem}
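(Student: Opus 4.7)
The plan is to recognize this as the standard error estimate for a Banach fixed-point iteration once one knows that the iteration map is a contraction with constant $\kappa$. The work of Castaneda--Rosa \cite{foliation} already shows $\mathcal{T}_{z_0}(\cdot,x)$ is a $\kappa$-contraction on $\mathcal{F}_\sigma$ (the same $\kappa$ as in Lemma \ref{lemma phi}), so the substance of the proof is purely the geometric-series bookkeeping.

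First, I would restate (or cite from \cite{foliation}) the contraction estimate
\begin{equation*}
\|\mathcal{T}_{z_0}(\varphi,x) - \mathcal{T}_{z_0}(\tilde\varphi,x)\|_\sigma \leq \kappa\,\|\varphi - \tilde\varphi\|_\sigma,
\end{equation*}
for all $\varphi,\tilde\varphi\in\mathcal{F}_\sigma$. This bound is obtained by the same computation as in Lemma \ref{lemma phi}: split the $X$- and $Y$-components, apply $\|e^{At}\|\le e^{\alpha t}$ and $\|e^{-Bt}\|\le e^{-\beta t}$, use the Lipschitz bound $\delta$ on $F$ and $G$, multiply through by $e^{-\sigma t}$, and evaluate the exponential integrals to pick up the factors $\delta/(\sigma-\alpha)$ and $\delta/(\beta-\sigma)$, whose maximum is $\kappa$.

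Second, I would iterate this bound along the sequence $\varphi^j$. Since $\varphi^{j+1} = \mathcal{T}_{z_0}(\varphi^j,x)$, a trivial induction gives
\begin{equation*}
\|\varphi^{j+1} - \varphi^j\|_\sigma \leq \kappa^{j}\|\varphi^1 - \varphi^0\|_\sigma.
\end{equation*}
Then, for any $m>j$, a telescoping triangle inequality yields
\begin{equation*}
\|\varphi^m - \varphi^j\|_\sigma \leq \sum_{i=j}^{m-1}\|\varphi^{i+1}-\varphi^i\|_\sigma
\leq \Bigl(\sum_{i=j}^{m-1}\kappa^i\Bigr)\|\varphi^1 - \varphi^0\|_\sigma
\leq \frac{\kappa^j}{1-\kappa}\|\varphi^1 - \varphi^0\|_\sigma.
\end{equation*}

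Finally, I would let $m\to\infty$. Since $\mathcal{T}_{z_0}(\cdot,x)$ is a contraction on the Banach space $\mathcal{F}_\sigma$ with fixed point $\varphi$, the iterates $\varphi^m$ converge to $\varphi$ in $\|\cdot\|_\sigma$, and the desired bound follows by continuity of the norm. There is no real obstacle here; the only mild subtlety is to make sure the initial guess $\varphi^0(x)(t)=e^{\alpha t}(x-x_0)$ lies in $\mathcal{F}_\sigma$ (which was already verified in the paragraph preceding the SIMP algorithm, using $\alpha<\sigma$) so that the contraction-mapping machinery applies and $\|\varphi^1 - \varphi^0\|_\sigma$ is finite.
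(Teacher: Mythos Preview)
Your proof is correct and follows essentially the same approach as the paper: cite the contraction bound $\mathrm{Lip}(\mathcal{T}_{z_0})\le\kappa$ from \cite{foliation}, telescope $\|\varphi^m-\varphi^j\|_\sigma$ into a geometric series with ratio $\kappa$, and pass to the limit $m\to\infty$. Your additional remark verifying $\varphi^0\in\mathcal{F}_\sigma$ is a nice touch that the paper leaves implicit.
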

\begin{proof}
  Let $m$ be an integer such that $m>j$.  From \cite{foliation}, we know that $Lip(\mathcal{T}) \leq \kappa$ and by the gap condition, $\kappa<1$.
\begin{align*}
\|\varphi^{m}(x)-\varphi^{j}(x)\|_{\sigma} & \leq \Sigma_{k=j+1}^{m}\|\varphi^{k}(x)-\varphi^{k-1}(x)\|_{\sigma}  \\
&\leq \Sigma_{k=j+1}^{m} \kappa^{k-1} \|\varphi^{1}(x)-\varphi^{0}(x)\|_{\sigma} 
%&\leq \|\varphi^{m}(x)-\varphi^{m-1}(x)\|_{\sigma} + \\|\varphi^{m-1}(x)-\varphi^{m-2}(x)\|_{\sigma} + \dots + \\|\varphi^{j+1}(x)-\varphi^{j}(x)\|_{\sigma} \\
%&\leq \kappa^{m-1}\|\varphi^{1}(x)-\varphi^{0}(x)\|_{\sigma}+\kappa^{m-2}\|\varphi^{1}(x)-\varphi^{0}(x)\|_{\sigma} + \dots + \kappa^{j}\|\varphi^{1}(x)-\varphi^{0}(x)\|_{\sigma}\\
%&=\kappa^{j}(1+\kappa+\dots+\kappa^{m-j-1})\|\varphi^{1}(x)-\varphi^{0}(x)\|_{\sigma} \\
\leq \frac{\kappa^{j}}{1-\kappa}\|\varphi^{1}(x)-\varphi^{0}(x)\|_{\sigma}.
\end{align*}
 Since the last inequality is independent of $m$, take $m\rightarrow \infty$ to obtain
\begin{equation*} \|\varphi(x)-\varphi^{j}(x)\|_{\sigma} \leq \frac{\kappa^j}{1-\kappa}\|\varphi^1(x)-\varphi^0(x)\|_{\sigma}.\end{equation*}
\end{proof}

Since $\Phi_{z_0}(x) := y_0 + Q\varphi(x)(0)$, the estimate for $\|\Phi(x)-\Phi^{j}(x)\| $ is straightforward.
\begin{lem} 
\label{Phi-Phij}
Under the same assumption above, one has
$$\|\Phi(x)-\Phi^{j}(x)\| \leq \frac{\kappa^j}{1-\kappa}\|\varphi^1(x)-\varphi^0(x)\|_{\sigma}.$$
\end{lem}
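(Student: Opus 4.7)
The plan is to reduce this estimate to the previous lemma in two short steps, since the statement says ``straightforward'' and indeed it is: the only work beyond the preceding lemma is to pass from the $\sigma$-weighted sup norm on the function space $\mathcal{F}_\sigma$ down to the pointwise norm at $t=0$.

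First I would unfold the definitions: by construction, $\Phi_{z_0}(x) = y_0 + Q\varphi(x)(0)$ and $\Phi^{j}_{z_0}(x) = y_0 + Q\varphi^{j}(x)(0)$, so
\[
\|\Phi(x) - \Phi^{j}(x)\| = \|Q\bigl(\varphi(x)(0) - \varphi^{j}(x)(0)\bigr)\| \leq \|\varphi(x)(0) - \varphi^{j}(x)(0)\|,
\]
using that $Q = I - P$ is a projector with operator norm at most $1$ under the max norm on $Z = X \times Y$.

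Next I would bound the right-hand side by the $\sigma$-weighted norm. Since the weight $e^{-\sigma t}$ equals $1$ at $t = 0$, we have
\[
\|\varphi(x)(0) - \varphi^{j}(x)(0)\| = e^{-\sigma \cdot 0}\|\varphi(x)(0) - \varphi^{j}(x)(0)\| \leq \sup_{t \geq 0} e^{-\sigma t}\|\varphi(x)(t) - \varphi^{j}(x)(t)\| = \|\varphi(x) - \varphi^{j}(x)\|_{\sigma}.
\]
Applying the preceding lemma (the telescoping/geometric estimate $\|\varphi(x) - \varphi^{j}(x)\|_\sigma \leq \frac{\kappa^{j}}{1-\kappa}\|\varphi^{1}(x) - \varphi^{0}(x)\|_\sigma$) then yields the claim.

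There is no real obstacle here; the only point to be careful about is the norm on $Z$. Because $Z$ is equipped with the max norm $\|(x,y)\| = \max\{\|x\|, \|y\|\}$ as fixed at the start of Section~1, the projector $Q$ is automatically non-expansive, which is what makes the first step go through without an extra constant. The estimate is then just evaluation at $t=0$ plus the previously established geometric convergence, so the proof fits in three or four lines.
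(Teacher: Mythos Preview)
Your proof is correct and follows exactly the same route as the paper's: unfold the definition $\Phi(x)=y_0+Q\varphi(x)(0)$, use non-expansiveness of $Q$ and evaluation at $t=0$ to pass to the $\|\cdot\|_\sigma$ norm, and invoke the preceding lemma. The paper's version is just terser, omitting the explicit justification that $\|Q\,\cdot\,\|\le\|\cdot\|$ and that the weight is $1$ at $t=0$.
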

\begin{proof}
By the definition of $\Phi$ and the previous lemma, 
\begin{align*}
\|\Phi(x)-\Phi^{j}(x)\| &\leq \|Q\varphi(x)(0)-Q\varphi^{j}(x)(0)\|\\
                                     &\leq\|\varphi(x)-\varphi^{j}(x)\|_{\sigma}\leq\frac{\kappa^j}{1-\kappa}\|\varphi^1(x)-\varphi^0(x)\|_{\sigma}.
\end{align*}
\end{proof}

We have an analogous result for $\Psi$, the proof of which is similar.
\begin{lem}
\label{Psi-Psij}
Under the same assumption above, one has
$$\|\Psi(y)-\Psi^{j}(y)\| \leq \frac{\kappa^j}{1-\kappa}\|\psi^1(y)-\psi^0(y)\|_{\sigma}.$$
\end{lem}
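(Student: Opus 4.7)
The plan is to mirror the proof of Lemma \ref{Phi-Phij} with the roles of $\mathcal{T}_{z_0}$, $\varphi$, $\mathcal{F}_\sigma$, and $Q$ played by $\mathcal{U}$, $\psi$, $\mathcal{G}_\sigma$, and $P$, respectively. The first step is to record the analog of the preceding iterate estimate. Let $\psi^j$ denote the iterates of $\mathcal{U}$ starting from some admissible $\psi^0\in\mathcal{G}_\sigma$, with fixed point $\psi$. From \cite{foliation} the map $\mathcal{U}$ is a contraction on $\mathcal{G}_\sigma$ with Lipschitz constant at most $\kappa<1$, so the standard telescoping calculation gives, for every $m>j$,
$$\|\psi^m(y)-\psi^j(y)\|_\sigma \leq \sum_{k=j+1}^m \|\psi^k(y)-\psi^{k-1}(y)\|_\sigma \leq \sum_{k=j+1}^m \kappa^{k-1}\|\psi^1(y)-\psi^0(y)\|_\sigma \leq \frac{\kappa^j}{1-\kappa}\|\psi^1(y)-\psi^0(y)\|_\sigma.$$
Since the bound is independent of $m$, sending $m\to\infty$ yields
$$\|\psi(y)-\psi^j(y)\|_\sigma \leq \frac{\kappa^j}{1-\kappa}\|\psi^1(y)-\psi^0(y)\|_\sigma.$$

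Next I would convert this to an estimate on $\Psi$. Using the definitions $\Psi(y)=P\psi(y)(0)$ and $\Psi^j(y)=P\psi^j(y)(0)$, the fact that $P$ is a projector of operator norm at most one, and the observation that the weight $e^{\sigma t}$ appearing in the $\mathcal{G}_\sigma$-norm equals one at $t=0$, I obtain
$$\|\Psi(y)-\Psi^j(y)\| \leq \|\psi(y)(0)-\psi^j(y)(0)\| \leq \sup_{t\leq 0} e^{\sigma t}\|\psi(y)(t)-\psi^j(y)(t)\| = \|\psi(y)-\psi^j(y)\|_\sigma,$$
and chaining with the previous display delivers the stated inequality.

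I expect essentially no obstacle here; the argument is a line-for-line transcription of the proof of Lemma \ref{Phi-Phij}. The only point deserving a brief comment is the evaluation-at-zero inequality $\|\psi(0)\|\leq\|\psi\|_\sigma$ needed at the end, which holds because $t=0$ lies at the boundary of the domain $(-\infty,0]$ of $\mathcal{G}_\sigma$ where the weight $e^{\sigma t}$ equals one. This is exactly the same mechanism that made the corresponding step work for $\mathcal{F}_\sigma$ on $[0,\infty)$ with weight $e^{-\sigma t}$, so no new ingredient is required.
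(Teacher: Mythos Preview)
Your proof is correct and is exactly the approach the paper intends: it states only that the proof is similar to that of Lemma~\ref{Phi-Phij}, and your transcription---contraction of $\mathcal{U}$ on $\mathcal{G}_\sigma$, telescoping, then evaluation at $t=0$ where the weight equals one---is precisely that similar proof written out.
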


By Lemma \ref{Phi-Phij} and \ref{Psi-Psij}, one can deduce that
\begin{equation}
\label{estimate for sigma}
\|\Sigma(z) - \Sigma^{j_1,j_2}(z)\| \leq \frac{\kappa^j}{1-\kappa} \max\{\kappa^{j_1-j}\|\varphi^1(x)-\varphi^0(x)\|_{\sigma},\; \kappa^{j_2-j}\|\psi^1(y)-\psi^0(y)\|_{\sigma}\},
\end{equation}
where $j=\min\{j_1,j_2\}$. We are ready to give an estimate for $\|z_{0}^{+} - z_0^{*}\|$.
\begin{prop}
\label{error_estimate}
Let $z_0^+$ be the fixed point of $\Sigma$ and $z_0^*$ be the fixed point of $\Sigma^{j}$.  Then
\begin{equation*}\|z_{0}^{+} - z_0^{*}\| \leq c_{j_1,j_2}\frac{\kappa^j}{1-2\kappa}, \end{equation*}
where $j=\min\{j_1,j_2\}$ and $c_{j_1,j_2} =  max\{\kappa^{j_1-j}\|\varphi^1(x)-\varphi^0(x)\|_{\sigma},\; \kappa^{j_2-j}\|\psi^1(y)-\psi^0(y)\|_{\sigma}\}$.
\end{prop}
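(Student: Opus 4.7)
The plan is to use the standard perturbed-fixed-point trick based on the triangle inequality and the contraction property of $\Sigma$ (together with the already-established bound on $\|\Sigma(z) - \Sigma^{j_1,j_2}(z)\|$ from \eqref{estimate for sigma}). Since both $\Sigma$ and $\Sigma^{j_1,j_2}$ admit Lipschitz bounds of the form $\kappa/(1-\kappa)$ under the stronger gap condition \eqref{stronger gap condition} (Proposition \ref{Lip for Sigma} for $\Sigma^{j_1,j_2}$, and the analogous bound for $\Sigma$ follows by the same argument applied to the exact $\Phi$ and $\Psi$), the proof will be essentially a three-line manipulation once the right splitting is chosen.

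First I would write
\begin{align*}
\|z_0^+ - z_0^*\| &= \|\Sigma(z_0^+) - \Sigma^{j_1,j_2}(z_0^*)\| \\
&\leq \|\Sigma(z_0^+) - \Sigma(z_0^*)\| + \|\Sigma(z_0^*) - \Sigma^{j_1,j_2}(z_0^*)\|.
\end{align*}
For the first term, I would invoke $\mathrm{Lip}(\Sigma)\leq \kappa/(1-\kappa)$ to obtain
$\|\Sigma(z_0^+) - \Sigma(z_0^*)\| \leq \frac{\kappa}{1-\kappa}\|z_0^+ - z_0^*\|$.
For the second term, I would apply the already-proved estimate \eqref{estimate for sigma} with $z = z_0^*$, giving
$\|\Sigma(z_0^*) - \Sigma^{j_1,j_2}(z_0^*)\| \leq \frac{\kappa^j}{1-\kappa}\, c_{j_1,j_2}$.

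Combining these and moving the $\|z_0^+ - z_0^*\|$ term to the left yields
\begin{equation*}
\Bigl(1 - \frac{\kappa}{1-\kappa}\Bigr)\|z_0^+ - z_0^*\| \leq \frac{\kappa^j}{1-\kappa}\, c_{j_1,j_2},
\end{equation*}
and simplifying $1 - \kappa/(1-\kappa) = (1-2\kappa)/(1-\kappa)$ gives the claimed bound, where positivity of $1-2\kappa$ is exactly the stronger gap condition \eqref{stronger gap condition} used in Proposition \ref{Lip for Sigma}.

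The only substantive obstacle is ensuring that $\Sigma$ itself (not merely the approximant $\Sigma^{j_1,j_2}$) satisfies $\mathrm{Lip}(\Sigma)\leq \kappa/(1-\kappa)$; this is not stated verbatim above but follows by the same proof as Proposition \ref{Lip for Sigma}, with $\Phi$ and $\Psi$ in place of $\Phi^{j_2}$ and $\Psi^{j_1}$, using Lemmas \ref{lemma for Phi} and \ref{lemma for Psi} in the limit $j \to \infty$. Everything else is routine manipulation once \eqref{estimate for sigma} is in hand.
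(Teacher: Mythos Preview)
Your argument is correct and uses the same perturbed-fixed-point idea as the paper. The only difference is the choice of intermediate point in the triangle inequality: the paper inserts $\Sigma^{j_1,j_2}(z_0^{+})$ rather than your $\Sigma(z_0^{*})$, i.e.\ it writes
\[
\|z_0^{+}-z_0^{*}\| \le \|\Sigma(z_0^{+})-\Sigma^{j_1,j_2}(z_0^{+})\| + \|\Sigma^{j_1,j_2}(z_0^{+})-\Sigma^{j_1,j_2}(z_0^{*})\|,
\]
then applies \eqref{estimate for sigma} at $z_0^{+}$ and Proposition~\ref{Lip for Sigma} for the second term. The payoff of that splitting is that it uses the Lipschitz bound for $\Sigma^{j_1,j_2}$, which is exactly what Proposition~\ref{Lip for Sigma} already provides, so no separate argument for $\mathrm{Lip}(\Sigma)\le \kappa/(1-\kappa)$ is needed. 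Your route is equally valid---and you correctly flag and dispatch the extra step---but the paper's choice is marginally cleaner for this reason. Note also that the two splittings evaluate the $z$-dependent constant $c_{j_1,j_2}$ at different points ($z_0^{+}$ versus $z_0^{*}$); the proposition as stated leaves this ambiguous, so either reading is acceptable.
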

\begin{proof}
By (\ref{estimate for sigma}) and Proposition \ref{Lip for Sigma}, 
\begin{align*}
\|z_{0}^{+} - z_0^{*}\| &=\|\Sigma(z_{0}^{+})-\Sigma^{j_1,j_2}(z_0^{*})\|
                        \leq \|\Sigma(z_{0}^{+})-\Sigma^{j_1,j_2}(z_{0}^{+})\|+ \|\Sigma^{j_1,j_2}(z_{0}^{+})-\Sigma^{j_1,j_2}(z_0^{*})\|\\
                        &\leq c_{j_1,j_2} \frac{\kappa^{j}}{1-\kappa}+ \frac{\kappa}{1-\kappa}\|z_{0}^{+} - z_0^{*}\|.
\end{align*}
Therefore, we have \begin{equation*} \|z_{0}^{+} - z_0^{*}\| \leq
c_{j_1,j_2}\frac{\kappa^j}{1-2\kappa}. \end{equation*}
\end{proof}

For simplicity, we denote $\Sigma^j := \Sigma^{j,j}$ in the following numerical tests.

\subsection{Application}
\subsubsection{Test Problem}
In this section, we demonstrate the computation of the tracking initial condition for the test problem \eqref{ode for toy problem}.  In this test, $(x_0,y_0)=(1,1)$ and $p=10$ and hence by (\ref{tracking for toy}), the exact tracking initial condition is \begin{equation}{u_0^+ \choose v_0^+} = {\frac{1}{10\sqrt{2}} \choose 1+\frac{1}{10}\tan^{-1}(\frac{1}{10\sqrt{2}})} \approx {7.0711e-02 \choose 1.0070}.\end{equation}

By Proposition \ref{Lip for Sigma}, for fixed $j$, the Lipschitz constant for $\Sigma^j$ is \begin{equation*} \frac{\kappa}{1-\kappa}. \end{equation*}  In the test problem, $\alpha=-1$, $\beta=1$, and $\delta<1/p$ and hence $\kappa < 1/p$.  In this case, $p=10$, and therefore $Lip(\Sigma^j) < 1/9$.  As Table \ref{tracking table j6} (A) shows, in the first two iterations we observe that  the error decreases roughly by a factor of $0.01$ which confirms $Lip(\Sigma^j) < 1/9$.

To verify Proposition \ref{error_estimate}, we will vary $j$ and observe how the error depends on $j$.  As in Table \ref{tracking table j6} (B), we can see that the error decreases roughly by a factor of $0.1$ as $j$ increases which also confirms Proposition \ref{error_estimate} that the rate of the convergence is approximately \\$\kappa (\approx 1/10)$.
\begin{figure}
  \subcaptionbox{}{\begin{tabular}{|l|l|}
    \hline
$z^{i+1}$ & Error \\
    \hline
$z^0$  &  3.5429         \\  \hline
$z^1$  &  2.4E-2 \\  \hline
$z^2$  &  8.4E-5 \\  \hline
$z^3$  &  2.9E-7\\  \hline
$z^4$  &  1.0E-9\\  \hline
$z^5$  &  3.3E-11\\  \hline
%    \hline
%    7.0711e-02 & 1.0070 & The exact point\\
%    \hline
  \end{tabular}}
  \subcaptionbox{}{\begin{tabular}{|l|l|}
\hline
$\Sigma^j$ & Error \\ \hline
$\Sigma^1$	&	4.97E-4		\\ \hline
$\Sigma^2$	&	8.49E-6		\\ \hline
$\Sigma^3$	&	1.85E-7		\\ \hline
$\Sigma^4$	&   4.50E-9		\\ \hline
$\Sigma^5$	&	1.16E-10		\\ \hline
$\Sigma^6$	&	3.25E-11		\\ \hline
      \end{tabular}}
\subcaptionbox{}{ \psfrag{X}{\tiny$X$}
 \psfrag{Y}{\tiny$Y$}
 \psfrag{z0}{\tiny$z_0$}
 \psfrag{zinitial}{\tiny$z^0$}
 \psfrag{z1}{\tiny$z^1$}
 \psfrag{z2}{\tiny$z^2$}
 \psfrag{z3}{\tiny$\;z^3$}
 \psfrag{phi}{\tiny$\text{graph}(\Phi_{z_0})$}
 \psfrag{psi}{\tiny$\text{graph}(\Psi_0)$}
 \includegraphics[width=2.5 in]{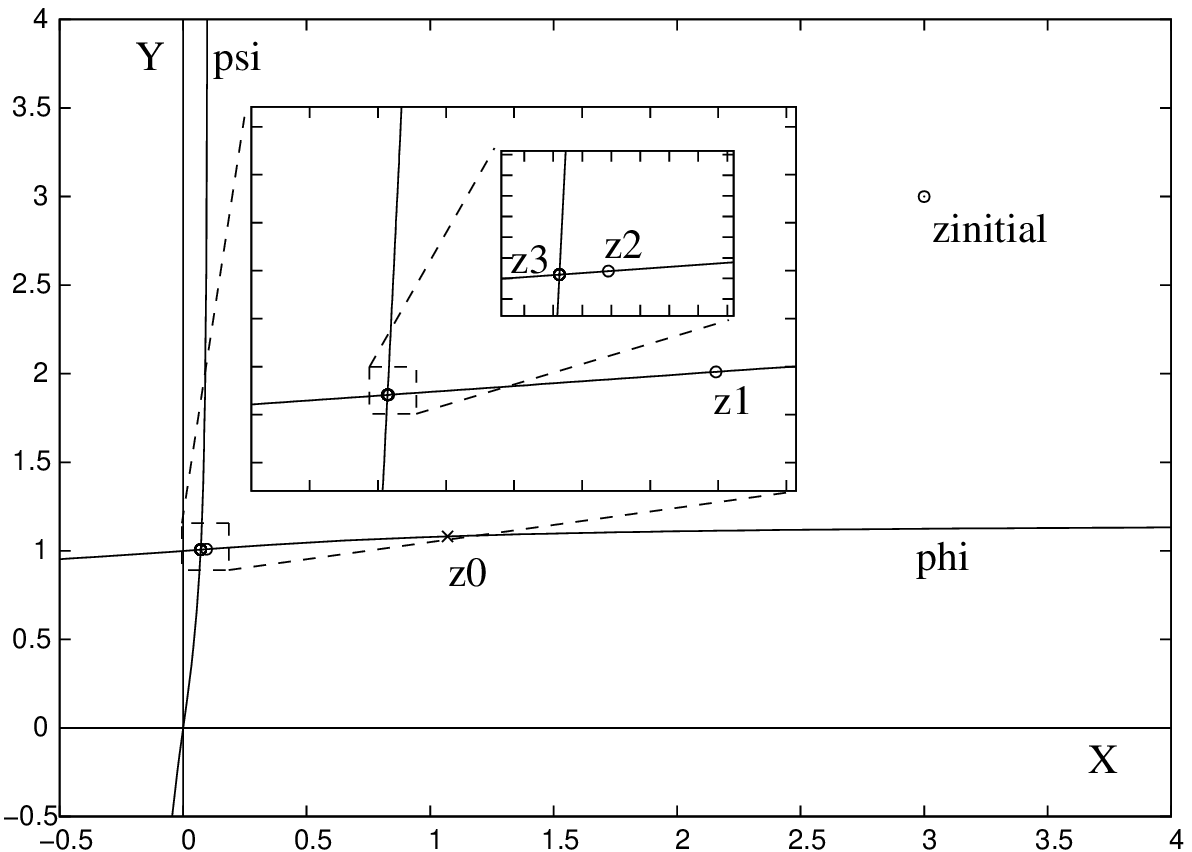}}
 
\caption{(A) This table shows the convergence of the tracking initial condition for \eqref{ode for toy problem} using SIMP for fixed $j=15$ and $z^{i+1}:=\Sigma^{j}(z^i)$ with $h=0.01$.  (B) This table shows the relation between $j$ and the absolute error. (C) Convergence for data in (A) in phase space. }
\label{tracking table j6}
\end{figure}

\subsubsection{Approximate Inertial Form of the KSE}
\label{section tracking AIFKSE}
We express the KSE in the functional form:
\begin{equation*}
 \frac{du}{dt} + Lu + R(u) = 0, \;\; u\in \mathcal{H},
\end{equation*}
where $\mathcal{H}$ is an appropriate Hilbert space (see \cite{AIFKSE}). The linear operator $L$ is given by \begin{equation*}Lu=4\frac{\partial^4 u}{\partial\xi^4} + \gamma\frac{\partial^2 u}{\partial \xi^2}\end{equation*} along with periodic, odd boundary conditions.  The remaining terms are then collected in $R$.  The infinite-dimensional phase space $\mathcal{H}$ is split into low- and high-wavenumber modes by means of the projectors 
\begin{equation*}
 P=P_n:\mathcal{H}\rightarrow span\{sin(\xi), sin(2\xi), \dots, sin(n\xi) \},\; Q = Q_n = I - P_n.
\end{equation*}
Thus, $u = p + q$, where $p = Pu$ and $q = Qu$.  We will use the approximate inertial manifold (see \cite{FMT}), $\Phi_1(p) = -L^{-1}QR(p)$ .  The approximate inertial form is
\begin{equation}
\label{AIFKSE equ}
 \frac{dp}{dt} + Lp + PR(p + \Phi_1(p)) = 0,
\end{equation}
which in our foliation framework would mean $z=p$, $C=-L$, and $H(z) = -PR(z+\Phi_1(z))$.  Compare to the Galerkin approximation which amounts to replacing $\Phi_1$ with $\Phi_0\equiv 0$.
We fix $n=3$ as it was demonstrated in \cite{AIFKSE} that this is sufficient for \eqref{AIFKSE equ} to capture the long time dynamics of the KSE for $\gamma\in[0,36]$. There are two reasons convergence should be slower for this reduced system.  First, the gap in the eigenvalues is smaller.  Second, the composition of the nonlinear term with itself in \eqref{AIFKSE equ} makes for a larger Lipschitz constant. 
%\begin{center}
%\begin{figure}[h!]
% \psfrag{Mz0}{\tiny$\mathcal{M}_{z_0}$}
% \psfrag{z0}{\tiny$z_0$}
% \psfrag{z1}{\tiny$z_1$}
% \psfrag{Mzt1z0}{\tiny$\mathcal{M}_{z(t_1,z_0)}$}
% \psfrag{Mzt2z0}{\tiny$\mathcal{M}_{z(t_2,z_0)}$} 
% \psfrag{z(t1,z0)}{\tiny$z(t_1,z_0)$}
% \psfrag{z(t1,z1)}{\tiny$z(t_1,z_1)$}
% \psfrag{z(t2,z0)}{\tiny$z(t_2,z_0)$}
% \psfrag{z(t2,z1)}{\tiny$z(t_2,z_1)$}
% \psfrag{p1}{\tiny$p_1$}
%  \psfrag{p2}{\tiny$p_2$}
% \psfrag{p3}{\tiny$p_3$}
%
%\subfloat[Convergence behavior of PWCONST for tracking initial condition for AIF.]{\includegraphics[scale=.5]{devError_rev.eps}
%}
%\subfloat[Demonstrate \eqref{moving test}.]{\includegraphics[scale=.5]{move_rev3.eps}}
%
%\caption{ (A) Difference in successive approximation of $\Sigma^{J}$ versus the number of the iterations.  (B) This test is for AIF of the KSE with $\gamma=25$, $dim(X)=1$, $dim(Y)=2$, $z_0=(0.2, 0, 0.1)$, $z_1=(0.05, 0.00217,  0.10002)$, $t_1=0.0005$, $t_2=0.001$, and the radius of the truncation ball is $15$.  For AIF of the KSE.  $z_0=(-0.012, 0.02, 0.04)$.  }
%\label{moving}
%\end{figure}
%\end{center}

Given two initial conditions for \eqref{AIFKSE equ}, denoted by $z_1$ and $z_2$, which are close to each other but on opposite sides of a separatrix, we will construct the leaves of the stable foliation through each point.  Since each leaf is an equivalence class, the points on the same leaf should have the same long time behavior.  This property is demonstrated in Figure \ref{fp_and_lc}. 

%\begin{center}
\begin{figure}[h!]
\psfrag{p1}{\tiny$p_1$}
\psfrag{p2}{\tiny$p_2$}
\psfrag{p3}{\tiny$p_3$}
\psfrag{ztz1}{\tiny$z(\cdot,z_1)$}
\psfrag{zta1}{\tiny$z(\cdot,a_1)$}
\psfrag{ztb1}{\tiny$z(\cdot,b_1)$}
\psfrag{ztc1}{\tiny$z(\cdot,c_1)$}
\psfrag{ztz2}{\tiny$z(\cdot,z_2)$}
\psfrag{zta2}{\tiny$z(\cdot,a_2)$}
\psfrag{ztb2}{\tiny$z(\cdot,b_2)$}
\psfrag{ztc2}{\tiny$z(\cdot,c_2)$}

\psfrag{Mz1}{\tiny$\mathcal{M}_{z_1}$}
\psfrag{Mz2}{\tiny$\mathcal{M}_{z_2}$}

\begin{center}
\subfloat[ ]{\includegraphics[scale=.5]{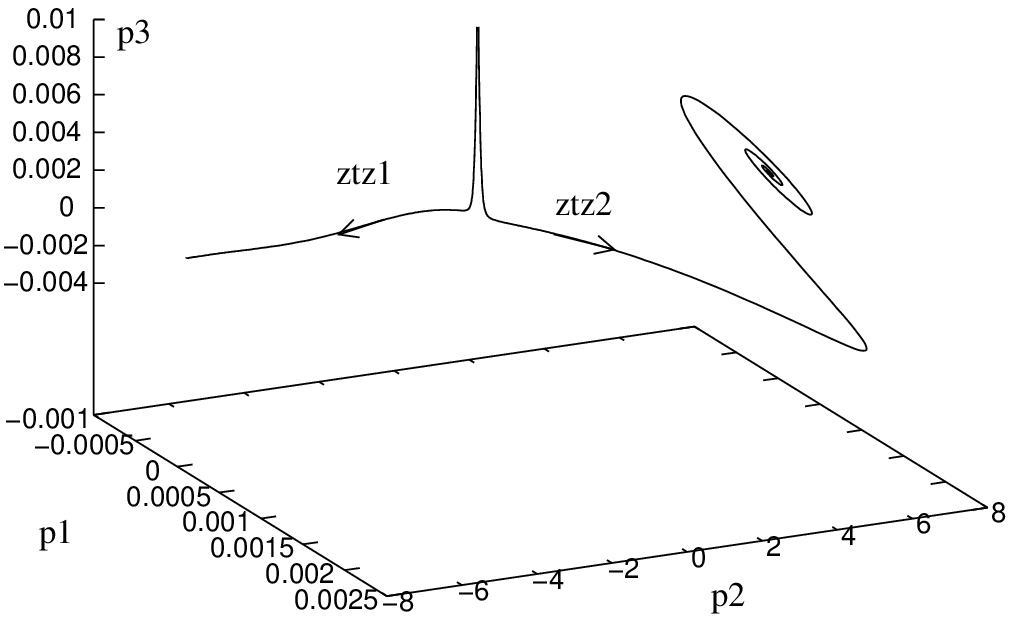}}
\subfloat[ ]{\includegraphics[scale=.5]{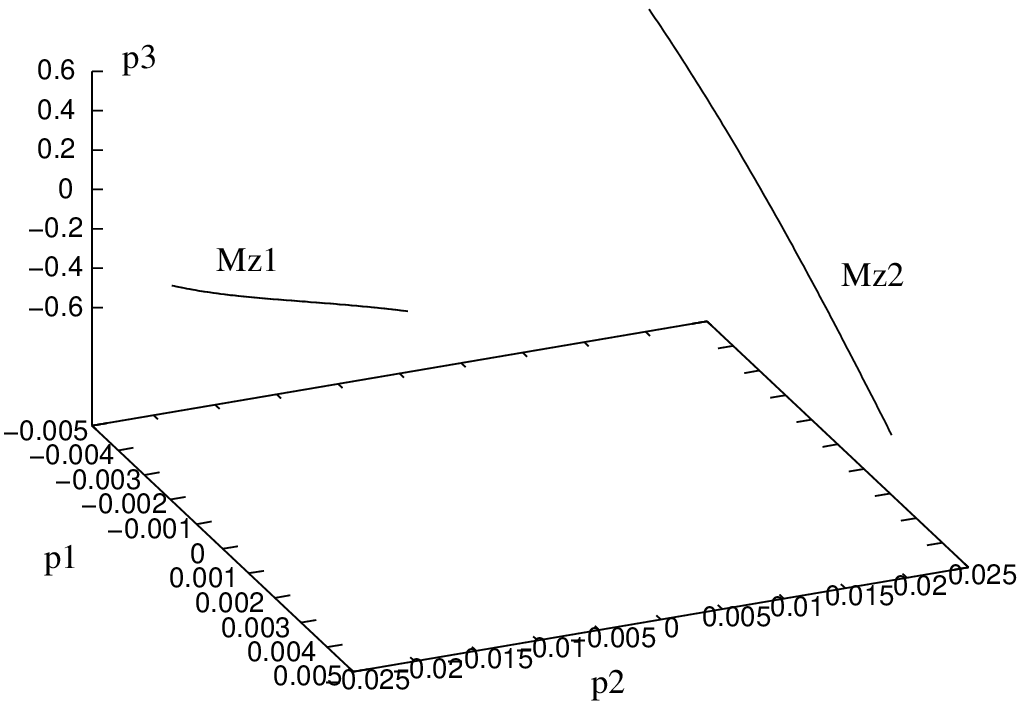}} \\
\subfloat[ ]{\includegraphics[scale=.5]{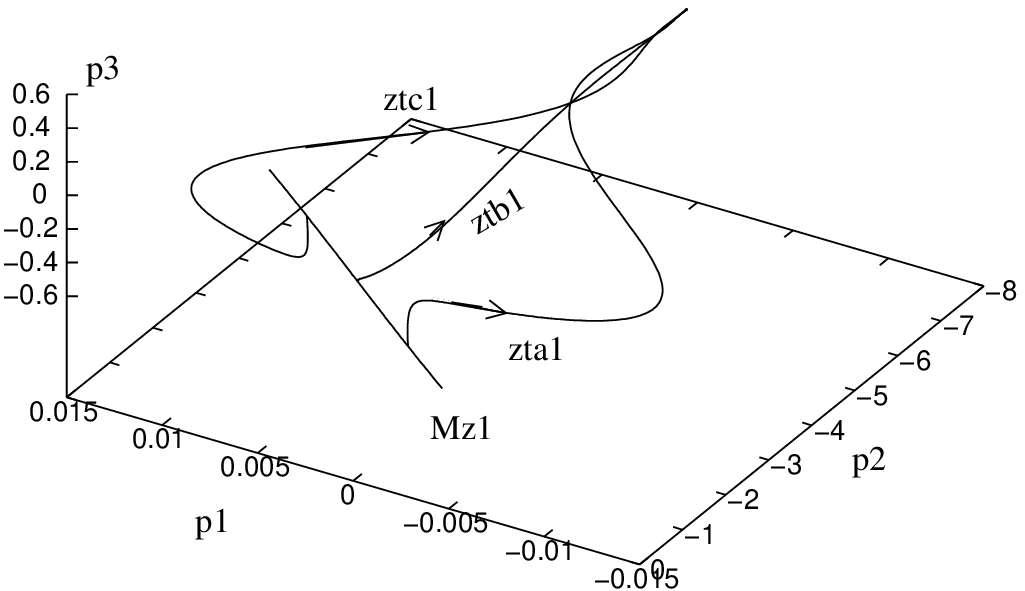}}
\subfloat[ ]{\includegraphics[scale=.5]{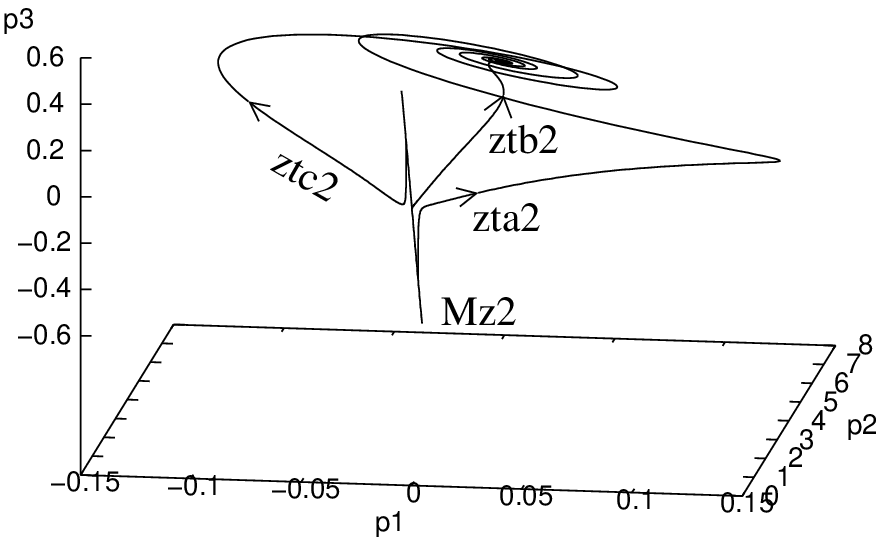}}
%\subfloat[ ]{\includegraphics[scale=.6]{LongTimeBehaviorFpLc.eps}}
%\subfloat[ ]{\includegraphics[scale=.6]{foliationfplc_rev.eps}} \\
%\subfloat[ ]{\includegraphics[scale=.6]{propfoliationfp_rev.eps}}
%\subfloat[ ]{\includegraphics[scale=.6]{propfoliationlc_rev.eps}}
\caption{(A) The long time behavior of \eqref{AIFKSE equ} for two nearly different initial conditions, $z_1=(0, 0.02, 0.01)$ and $z_2=(0, -0.02, 0.01)$. (B) Leaves in the stable foliation through $z_1$ and $z_2$.  (C) $a_1$, $b_1$, $c_1\in\mathcal{M}_{z_1}$.  (D) $a_2$, $b_2$, $c_2\in\mathcal{M}_{z_2}$.  }
\label{fp_and_lc}
\end{center}
\end{figure}

\section{Comparison between the tracking initial condition and the projected one}
Given an initial condition $z_0$, it is natural to take $\tilde{z_0}:= (0,y_0)$, the linear projection onto the unstable eigenspace, as the approximate tracking initial condition since $Y$ is tangent to the inertial manifold.  However, the projected initial condition may have the wrong long time behavior as Figure \ref{lineartracking}(A) shows.  Moreover, even if it captures the correct long time behaviors, the tracking rate is not optimal compared to the tracking rate for the tracking initial condition.  By Proposition \ref{Exponential Tracking}, one has that
\begin{equation}
\label{tracking_estimate}
\|z(t,z_0)-z(t,z_0^+)\| \leq e^{(\alpha + \delta)t} \|x_0 - x_0^+\|,\;\forall\; t\geq0.
\end{equation}
Thus, the tracking rate for $z_0^+$ is $\alpha + \delta$.  To estimate for the tracking rate for $\tilde{z_0}$, we need so called {\it cone invariance property} in \cite{foliation}.   It is a stronger version of the squeezing property (where the gap condition does not hold), which was originally introduced for the Navier-Stokes equation in \cite{squeezing} and improved in \cite{FMT}.  

\begin{prop} (Cone Invariance Property)
\label{Cone Invariance Property}
If $z_1 \neq z_2$ and denote $u(t) = x(t,z_1) - x(t,z_2)$ and $v(t) = y(t,z_1) - y(t,z_2)$, then either
\begin{enumerate}[(i)]
\item{$\|v(t)\| < \|u(t)\|$, $\forall t \in \mathbb{R}$}
\item{$\|v(t)\| > \|u(t)\|$, $\forall t \in \mathbb{R}$}
\item{$\exists\; t_0 \in \mathbb{R}$ such that \begin{equation*} \begin{cases} \|v(t)\| < \|u(t)\|\;\;,\;\forall\; t<t_0\\\|v(t)\| = \|u(t)\|\;\;,\;\forall\; t=t_0\\\|v(t)\| > \|u(t)\|\;\;,\;\forall\; t>t_0\end{cases}\end{equation*}}
\end{enumerate}
\end{prop}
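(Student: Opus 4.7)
My plan is to reduce the trichotomy to a local statement: at any time $t_0$ with $\|u(t_0)\|=\|v(t_0)\|$, the scalar $\theta(t):=\|v(t)\|-\|u(t)\|$ satisfies $\theta(t_0-h)<0<\theta(t_0+h)$ for every sufficiently small $h>0$. First observe that uniqueness of solutions for \eqref{zODE} forces $(u(t),v(t))\neq 0$ for every $t$ since $z_1\neq z_2$, so any zero of $\theta$ occurs at a point where $\|u(t_0)\|=\|v(t_0)\|=m$ for some $m>0$. Granted the local statement, the three cases follow from continuity of $\theta$: if $\theta$ has no zero it has constant sign, giving (i) or (ii); if it has a zero I will show that zero is unique, giving (iii).

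The local statement will come from variation of constants applied to the difference equations $\dot u=Au+\widetilde F(t)$ and $\dot v=Bv+\widetilde G(t)$, where $\widetilde F(t):=F(z(t,z_1))-F(z(t,z_2))$, $\widetilde G$ is analogous, and both have norm bounded by $\delta M(t)$ with $M(t):=\max\{\|u(t)\|,\|v(t)\|\}$. The hypothesis $\|e^{At}\|\le e^{\alpha t}$ yields directly
\begin{equation*}
\|u(t_0+h)\| \le e^{\alpha h}\|u(t_0)\| + \delta\int_{t_0}^{t_0+h} e^{\alpha(t_0+h-r)}M(r)\,dr,\qquad h\ge 0,
\end{equation*}
while the hypothesis $\|e^{-Bt}\|\le e^{-\beta t}$ must be used indirectly, by first solving the $v$-equation for $v(t_0)$ in terms of $v(t_0+h)$ before taking norms; this gives the reverse-direction inequality
\begin{equation*}
\|v(t_0+h)\| \ge e^{\beta h}\|v(t_0)\| - \delta\int_{t_0}^{t_0+h} e^{\beta(t_0+h-r)}M(r)\,dr.
\end{equation*}
The same method applied backward in time yields analogous estimates at $t_0-h$ for $h>0$. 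Expanding to first order in $h$ at the candidate zero, where $M(t_0)=m$, and using continuity of $M$, I obtain $\theta(t_0+h)\ge(\beta-\alpha-2\delta)\,m\,h+o(h)$ and $\theta(t_0-h)\le -(\beta-\alpha-2\delta)\,m\,h+o(h)$; the spectral gap \eqref{gap condition} makes the coefficient $(\beta-\alpha-2\delta)m$ strictly positive, proving the local statement.

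Uniqueness of the zero then follows by contradiction: if $\theta(t_1)=\theta(t_2)=0$ with $t_1<t_2$, the local result gives $\theta>0$ just right of $t_1$, so $t^*:=\inf\{s\in(t_1,t_2]:\theta(s)\le 0\}$ satisfies $t^*>t_1$; continuity and the definition of $t^*$ force $\theta(t^*)=0$ and $\theta>0$ on $(t_1,t^*)$, contradicting the local statement applied at $t^*$. I expect the main obstacle to be that the norm on a general Banach space is not differentiable, so $\tfrac{d}{dt}\|u\|$ and $\tfrac{d}{dt}\|v\|$ need not exist in any pointwise sense; I plan to sidestep this entirely by reading the needed one-sided estimates off the integral representations above rather than by differentiating norms directly.
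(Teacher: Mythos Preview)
The paper does not supply its own proof of this proposition; it is quoted from Casta\~neda--Rosa \cite{foliation} (see the sentence immediately preceding the statement). So there is no in-paper argument to compare against. Your proposal, however, is a clean and correct self-contained proof.

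A few remarks confirming the key steps. Your use of uniqueness to rule out $(u(t),v(t))=0$ is exactly right, and guarantees $m>0$ at any zero of $\theta$. The integral inequalities you write down are the correct way to extract one-sided growth/decay information from the semigroup bounds without differentiating norms: for $h>0$ the hypothesis $\|e^{Ah}\|\le e^{\alpha h}$ gives an \emph{upper} bound on $\|u(t_0+h)\|$ and a \emph{lower} bound on $\|u(t_0-h)\|$ (via $u(t_0)=e^{Ah}u(t_0-h)+\cdots$), while $\|e^{-Bh}\|\le e^{-\beta h}$ gives a \emph{lower} bound on $\|v(t_0+h)\|$ and an \emph{upper} bound on $\|v(t_0-h)\|$. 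Combining these with $\|u(t_0)\|=\|v(t_0)\|=m$ and the continuity of $M$ yields precisely
\[
\theta(t_0+h)\ge (\beta-\alpha-2\delta)\,m\,h+o(h),\qquad \theta(t_0-h)\le -(\beta-\alpha-2\delta)\,m\,h+o(h),
\]
and the gap condition \eqref{gap condition} makes the leading coefficient strictly positive. Your uniqueness-of-zero argument via $t^*=\inf\{s\in(t_1,t_2]:\theta(s)\le 0\}$ is also sound: nonemptiness comes from $\theta(t_2)=0$, $t^*>t_1$ from the local statement at $t_1$, $\theta(t^*)=0$ from continuity and the infimum property, and $\theta>0$ on $(t_1,t^*)$ then contradicts the local statement at $t^*$.

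This is essentially the same mechanism underlying the proof in \cite{foliation}, namely that the cone boundary $\|u\|=\|v\|$ can be crossed only in the direction $\|u\|\to\|v\|$ as $t$ increases, a consequence of the gap $\beta-\alpha>2\delta$. Your choice to argue via integral representations rather than Dini derivatives of $t\mapsto\|u(t)\|$ and $t\mapsto\|v(t)\|$ is a good one in the general Banach setting and avoids any smoothness assumption on the norm.
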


\begin{figure}[h!]
\psfrag{p1}{\tiny$p_1$}
\psfrag{p2}{\tiny$p_2$}
\psfrag{p3}{\tiny$p_3$}
\psfrag{ztz0}{\tiny$z(\cdot,z_0)$}
\psfrag{zty0}{\tiny$z(\cdot,\tilde{z_0})$}
\psfrag{ztzp}{\tiny$z(\cdot,z_0^+)$}
\psfrag{projected}{\tiny$\ln\|z(t,z_0)-z(t,\tilde{z_0})\| $}
\psfrag{tracking}{\tiny$\;\;\;\;\;\;\ln\|z(t,z_0)-z(t,z_0^+)\|$}
%\psfrag{z(t,z_0)}{$z(t,z_0)$}

  \subfloat[]{\includegraphics[width=1.5in]{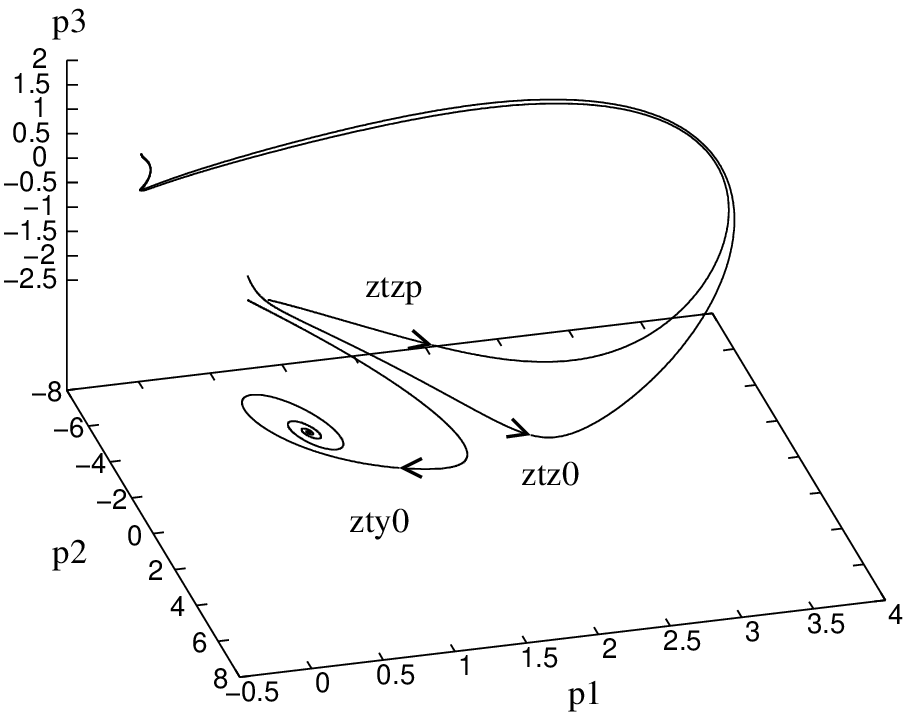}}
 \subfloat []{\includegraphics[width=1.5 in]{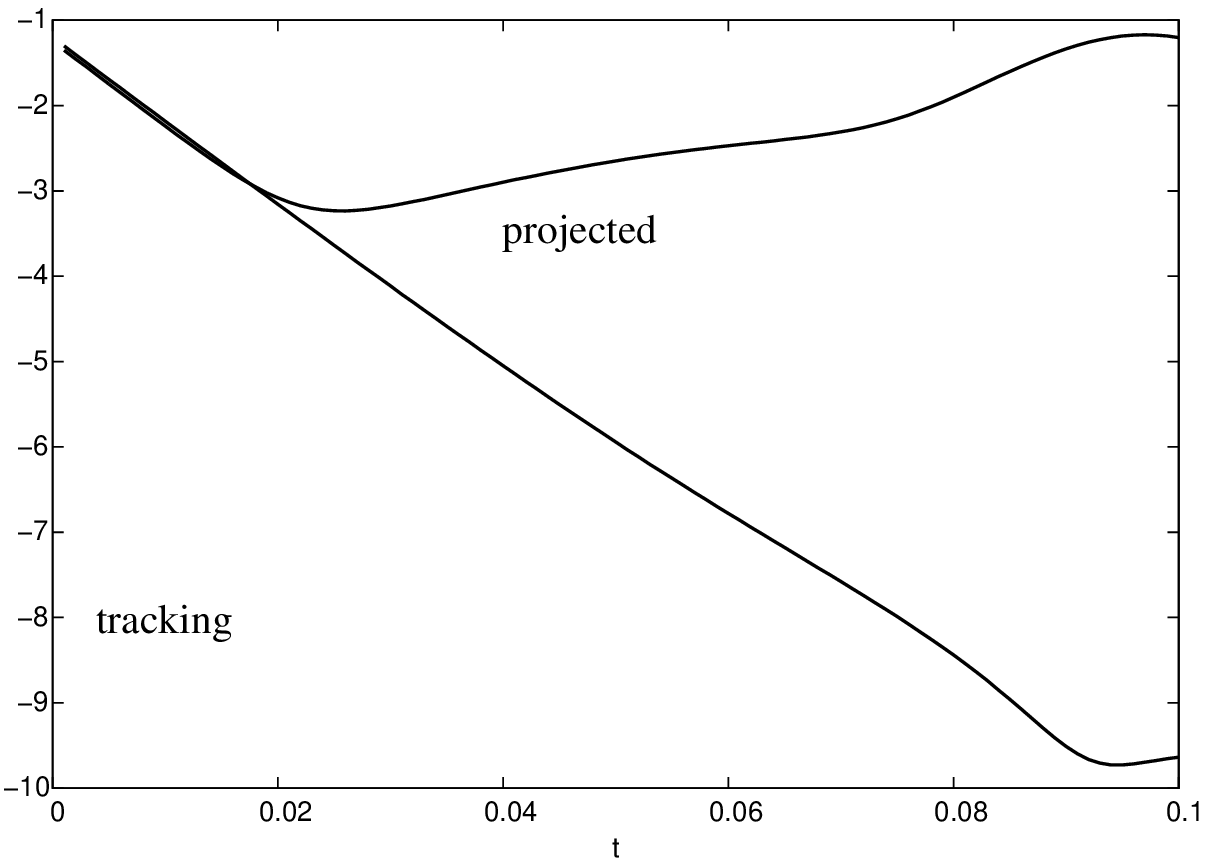} } 
 \subfloat []{\includegraphics[width=1.5 in]{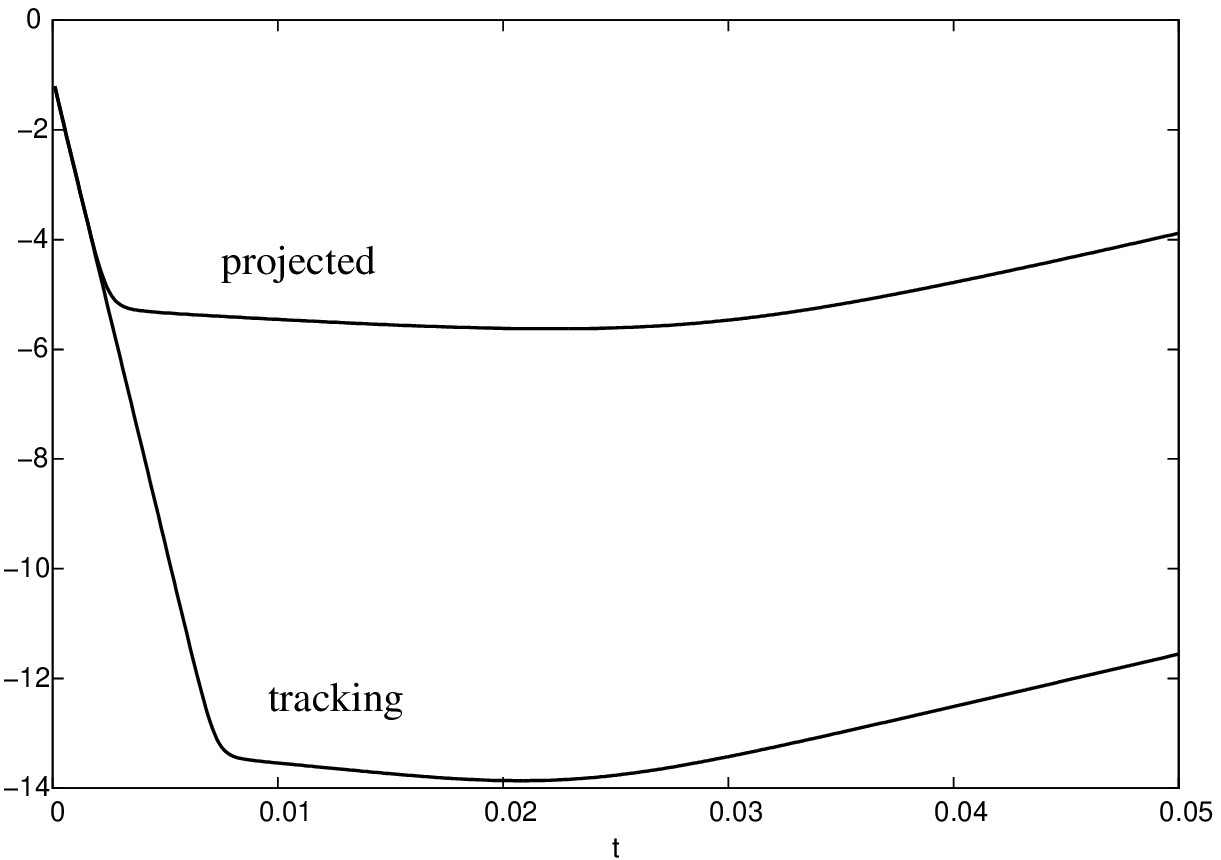} }

\caption{(A) For \eqref{AIFKSE equ}, $z_0=(0.12, 0.5, 0.5)$ and $\tilde{z_0} = (0.12, 0.5, 0.0)$ and the tracking initial condition $z_0^+=(0.2603, 0.5225, -0.00364)$, found as the fixed point of $\Sigma^{6}$.  (B) For \eqref{AIFKSE equ}.  (C) For $8$-mode KSE.}
\label{lineartracking}
 \end{figure}

%\begin{figure}[h!]
%\captionsetup[subfigure]{labelformat=empty}
%\psfrag{p1}{\tiny$p_1$}\captionsetup[subfigure]{labelformat=empty}
%\psfrag{p2}{\tiny$p_2$}
%\psfrag{p3}{\tiny$p_3$}
%\psfrag{ztz0}{\tiny$z(\cdot,z_0)$}
%\psfrag{zty0}{\tiny$z(\cdot,\tilde{z_0})$}
%\psfrag{ztzp}{\tiny$z(\cdot,z_0^+)$}
%%\psfrag{z(t,z_0)}{$z(t,z_0)$}
%\begin{tabular}{cc}
% \subfloat [(A) AIF KSE]{\includegraphics[width=2 in]{BetterTracking_rev} } &
% \multirow{-3}[24]{2 in}{\subfloat[(C) Comparison between tracking and projected initial condition.]{\includegraphics[width=2in]{comparisony0zstar_rev3.eps}}} \\ %the parameter was {-3}[24]{2}
% \subfloat [(B) $8$-mode KSE]{\includegraphics[width=2 in]{BetterTrackingKSE_rev} }
%\end{tabular}
% 
%\caption{For the AIF of KSE.  In this test, the given initial condition is $z_0=(0.12, 0.5, 0.5)$ and $\tilde{z_0} = (0.12, 0.5, 0.0)$ and the tracking initial condition $z_0^+=(0.2603, 0.5225, -0.00364)$, found as the fixed point of $\Sigma^{6}$.  Trajectories for both $z_0$ and $z_0^+$ go to the same steady state while the trajectory for $\tilde{z_0}$ goes to a different steady state, as $t\rightarrow\infty$.}
%\label{lineartracking}
% \end{figure}

\begin{lem}
\label{projected tracking}
For any $z_0\in Z$, either
 \begin{equation}
 \|z(t,z_0)-z(t,\tilde{z_0})\|\leq e^{(\alpha+\delta)t}\|x_0\|,\;\forall\; t\geq0,
 \end{equation}
or there exists a $t_0 \in (0,\infty)$ such that
\begin{equation}
\|z(t,z_0)-z(t,\tilde{z_0})\|\leq
  \begin{cases}
   e^{(\alpha+\delta)t}\|x_0\|,\;\forall\;t\in[0,t_0]. \\
   e^{((\beta-\delta)t+t_0(2\delta+\alpha-\beta))}\|x_0\|,\;\forall\;t\in(t_0,\infty).\\
  \end{cases}
 \end{equation}
\end{lem}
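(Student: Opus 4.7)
The plan is to apply the Cone Invariance Property (Proposition~\ref{Cone Invariance Property}) to the pair of trajectories $z(t,z_0)$ and $z(t,\tilde z_0)$. Setting $u(t) := x(t,z_0) - x(t,\tilde z_0)$ and $v(t) := y(t,z_0) - y(t,\tilde z_0)$, the fact that $z_0 - \tilde z_0 = (x_0,0)$ gives $u(0) = x_0$ and $v(0) = 0$, so $\|v(0)\| = 0 < \|x_0\| = \|u(0)\|$. This rules out alternative $(ii)$ of Proposition~\ref{Cone Invariance Property} and leaves exactly the dichotomy of the lemma: either $\|v(t)\| < \|u(t)\|$ for all $t \ge 0$, or there is a crossover $t_0 \in (0,\infty)$ at which $\|v(t_0)\| = \|u(t_0)\|$ with the inequality strict on $[0,t_0)$ and reversed on $(t_0,\infty)$.

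For $t \in [0,t_0]$ (or for all $t \ge 0$ in the first alternative), $\|v(t)\| \le \|u(t)\|$ and the product max-norm identifies $\|z(t,z_0) - z(t,\tilde z_0)\| = \|u(t)\|$. I would apply variation of parameters in the $X$-component,
\[
u(t) = e^{At} x_0 + \int_0^t e^{A(t-s)}\bigl[F(z(s,z_0)) - F(z(s,\tilde z_0))\bigr]\,ds,
\]
take norms using $\|e^{At}\| \le e^{\alpha t}$ together with the Lipschitz bound $\|F(z_1)-F(z_2)\| \le \delta\|u(s)\|$ valid in this regime, and apply Gronwall's inequality to $t \mapsto e^{-\alpha t}\|u(t)\|$ to conclude $\|u(t)\| \le \|x_0\|\,e^{(\alpha+\delta)t}$. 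In the second alternative, continuity at the crossover then yields $\|v(t_0)\| = \|u(t_0)\| \le \|x_0\|\,e^{(\alpha+\delta)t_0}$.

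For $t > t_0$ in the second alternative, $\|z(t,z_0) - z(t,\tilde z_0)\| = \|v(t)\|$, and the task is to bound $\|v(t)\|$ at the stated growth rate $\beta-\delta$. I would use the backward-in-time integral representation
\[
v(t) = e^{B(t-T)} v(T) - \int_t^T e^{B(t-s)}\bigl[G(z(s,z_0)) - G(z(s,\tilde z_0))\bigr]\,ds,\qquad T > t,
\]
exploiting that $\|e^{B(t-s)}\| \le e^{\beta(t-s)}$ is valid precisely because $t-s \le 0$. Letting $T \to \infty$ (the preparation \eqref{prepared equation} renders trajectories and hence $v(T)$ uniformly bounded, so the boundary term vanishes since $\beta > 0$ in the inertial-manifold regime), and substituting the cone identification $\|z(s,z_0) - z(s,\tilde z_0)\| = \|v(s)\|$ on $(t_0,\infty)$, produces the integral inequality
\[
\|v(t)\| \le \delta \int_t^\infty e^{\beta(t-s)}\|v(s)\|\,ds.
\]
A Lyapunov-Perron-style rescaling $W(t) := e^{-(\beta-\delta)(t-t_0)}\|v(t)\|$ converts this into a Gronwall inequality for $W$ whose only bounded solution (matched at $t_0$) is $W(t) \le \|v(t_0)\|$, giving $\|v(t)\| \le e^{(\beta-\delta)(t-t_0)}\|v(t_0)\|$. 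Multiplying by the bound on $\|v(t_0)\|$ recovers the exponent $(\beta-\delta)t + (2\delta+\alpha-\beta)t_0$.

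The principal obstacle is precisely this last rate-extraction step. The cone property itself naturally delivers a \emph{lower} bound of rate $\beta-\delta$ on the forward growth of $\|v\|$ inside the cone, so pinning down a matching \emph{upper} bound requires both the backward integral representation (the only route by which $\|e^{B\cdot}\|$ is controlled under the standing assumptions) and the uniform boundedness from the preparation, in the spirit of the Lyapunov-Perron contraction arguments underlying the maps $\mathcal{U}$ and $\mathcal{T}$ earlier in the paper.
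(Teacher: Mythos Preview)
Your setup via the Cone Invariance Property and your treatment of the regime $t\in[0,t_0]$ match the paper's proof exactly: variation of constants in the $X$-component, the Lipschitz bound, and Gronwall give $\|u(t)\|\le e^{(\alpha+\delta)t}\|x_0\|$, and continuity hands you $\|v(t_0)\|=\|u(t_0)\|\le e^{(\alpha+\delta)t_0}\|x_0\|$.

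The genuine gap is in your $t>t_0$ step. The claim that the preparation \eqref{prepared equation} renders trajectories, and hence $v(T)$, uniformly bounded is incorrect: the preparation cuts off only the nonlinear term $H$, and outside the support of $H_\rho$ the system reduces to $\dot z=Cz$, whose $Y$-component is expanding. Indeed, you yourself observe that the cone argument delivers the \emph{lower} bound $\|v(t)\|\ge e^{(\beta-\delta)(t-t_0)}\|v(t_0)\|$, so $\|v(T)\|\to\infty$ whenever case~(iii) occurs nontrivially, and there is no reason for the boundary term $e^{\beta(t-T)}\|v(T)\|$ to vanish. Even if one grants the limiting inequality $\|v(t)\|\le\delta\int_t^\infty e^{\beta(t-s)}\|v(s)\|\,ds$, it contains no reference to $t_0$ at all; after your rescaling it becomes $W(t)\le\delta\int_t^\infty e^{\delta(t-s)}W(s)\,ds$, which for bounded $W$ is a tautology ($\sup W\le\sup W$) and cannot single out the value $\|v(t_0)\|$. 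Worse, taken at face value with $\|v\|$ bounded and $\delta<\beta$, the inequality forces $\sup\|v\|\le(\delta/\beta)\sup\|v\|$, i.e.\ $v\equiv 0$, a contradiction.

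The paper does not take this detour. For $t>t_0$ it asserts directly, in one line, that the Gronwall inequality together with $\|u(t_0)\|=\|v(t_0)\|$ gives $\|v(t)\|\le\|v(t_0)\|e^{(\beta-\delta)(t-t_0)}$, and then chains this with the bound on $\|u(t_0)\|$ to obtain the stated exponent. There is no backward integral representation and no passage to $T\to\infty$.
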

\begin{proof}
Denote $u(t)=x(t,z_0)-x(t,\tilde{z_0})$ and $v(t)=y(t,z_0)-y(t,\tilde{z_0})$.  It is clear that \\$0=\|v(0)|\leq\|u(0)\|=\|x_0\|$ and thus by Proposition \ref{Cone Invariance Property}, either $$\text{(i)}\;\;\|v(t)\|\le\|u(t)\|,\;\;\forall\; t\in\mathbb{R}$$ or (ii) there exists $t_0 \in \mathbb{R}$ such that \begin{equation*} \begin{cases} \|v(t)\| < \|u(t)\|\;\;,\;\forall\; t<t_0\\\|v(t)\| = \|u(t)\|\;\;,\;\forall\; t=t_0\\\|v(t)\| > \|u(t)\|\;\;,\;\forall\; t>t_0.\end{cases}\end{equation*}
By the variational of constants, one has
\begin{equation*}
x(t,z_0)=e^{tA}x_0 + \int_0^t e^{(t-s)A}F(z(s,z_0))\;ds,\;
%\end{equation*}
\text{and}\;
%\begin{equation*}
x(t,\tilde{z_0})=\int_0^t e^{(t-s)A}F(z(s,\tilde{z_0}))\;ds.
\end{equation*}
If (i) holds, then
\begin{equation*}
\|z(t,z_0)-z(t,\tilde{z_0})\| = \|u(t)\| 
                      \leq e^{\alpha t} \|x_0\| + \delta\int_0^t e^{(t-s)A} \|u(s)\|\;ds.
\end{equation*}
By the Gronwall inequality, one has that \begin{equation*}\|u(t)\| \leq e^{(\alpha+\delta)t}\|x_0\|,\;\forall\;t\geq0. \end{equation*}
This proves the first part of the lemma.

Now if (ii) holds, for $t\leq t_0$, by the above estimate, we have
\begin{equation*}\|z(t,z_0)-z(t,\tilde{z_0})\| \leq e^{(\alpha+\delta)t}\|x_0\|.\end{equation*}
Using the Gronwall inequality and the fact that $\|u(t_0)\|=\|v(t_0)\|$ we have for $t>t_0$,
\begin{align*}
\|z(t,z_0)-z(t,\tilde{z_0})\| &= \|v(t)\|
                      \leq \|v(t_0)\|e^{(\beta-\delta)(t-t_0)} = \|u(t_0)\|e^{(\beta-\delta)(t-t_0)}\\
                      &\leq e^{(\alpha+\delta)t}\|x_0\|e^{(\beta-\delta)(t-t_0)}
                      =e^{(\beta-\delta)t+t_0(2\delta+\alpha-\beta)}\|x_0\|.\\
\end{align*}
\end{proof}

This lemma suggests that the tracking rate for $\tilde{z_0}$ is the same as the one for $z_0^+$ over a short period of time, namely $[0,t_0]$ for some $t_0$, but after that, the tracking rate becomes $(\beta-\delta)t+t_0(2\delta+\alpha-\beta)$.  It is easy to verify that $$(\alpha+\delta)t < (\beta-\delta)t+t_0(2\delta+\alpha-\beta),\;\forall t>t_0.$$

%We summarize this discussion in the following proposition.
%\begin{prop}
%\label{better tracking prop}
%Given any initial condition $z_0 \in Z$, if the inertial manifold is not linear, there exists $t_0 > 0$ such that the tracking rate for $y_0$ is bigger (worse) than $z_0^+$ for $t>t_0$.
%\end{prop}

To show this result numerically, we consider both \eqref{AIFKSE equ} with $\gamma = 25$, $dim(X) = 1$ and $dim(Y)=2$ with the initial condition $z_0$ chosen at random within a ball of radius $0.4$ and the $8$-mode Galerkin approximation of the KSE with $\gamma=32$,  $dim(X)=4$, and $dim(Y)=4$ and the initial condition $z_0$ chosen at random from a ball within radius $0.5$.   Figure \ref{lineartracking}(B) and (C) compare the rates of attractions for the projected and tracking initial data.  Moreover, we compute their tracking rates via the simple linear regressions for these data sets in a short period of time ($0.1$ and $.008$ for \eqref{AIFKSE equ} and 8-mode Galerkin approximation of the KSE) and they are $-88.40$ and $12.59$ for \eqref{AIFKSE equ} and $-1640.45$ and $-400.04$ for 8-mode Galerkin approximation of the KSE.   The gap in eigenvalues is the interval $(-99,21)$ for \eqref{AIFKSE equ} and $(-1700,-512)$ for 8-mode Galerkin approximation of the KSE.  To ensure that this is not a special case, we repeat the above process with $10000$ randomly chosen initial conditions and plot the tracking rates as shown in Figure \ref{Tracking Rate}.  Observe that the data for tracking initial conditions are mainly close to the lower bound of the gap, while the data for projected initial conditions are mainly close to the upper bound of the gap.

\begin{figure}
\begin{center}
 \subfloat [\eqref{AIFKSE equ}]{\includegraphics[width=2. in]{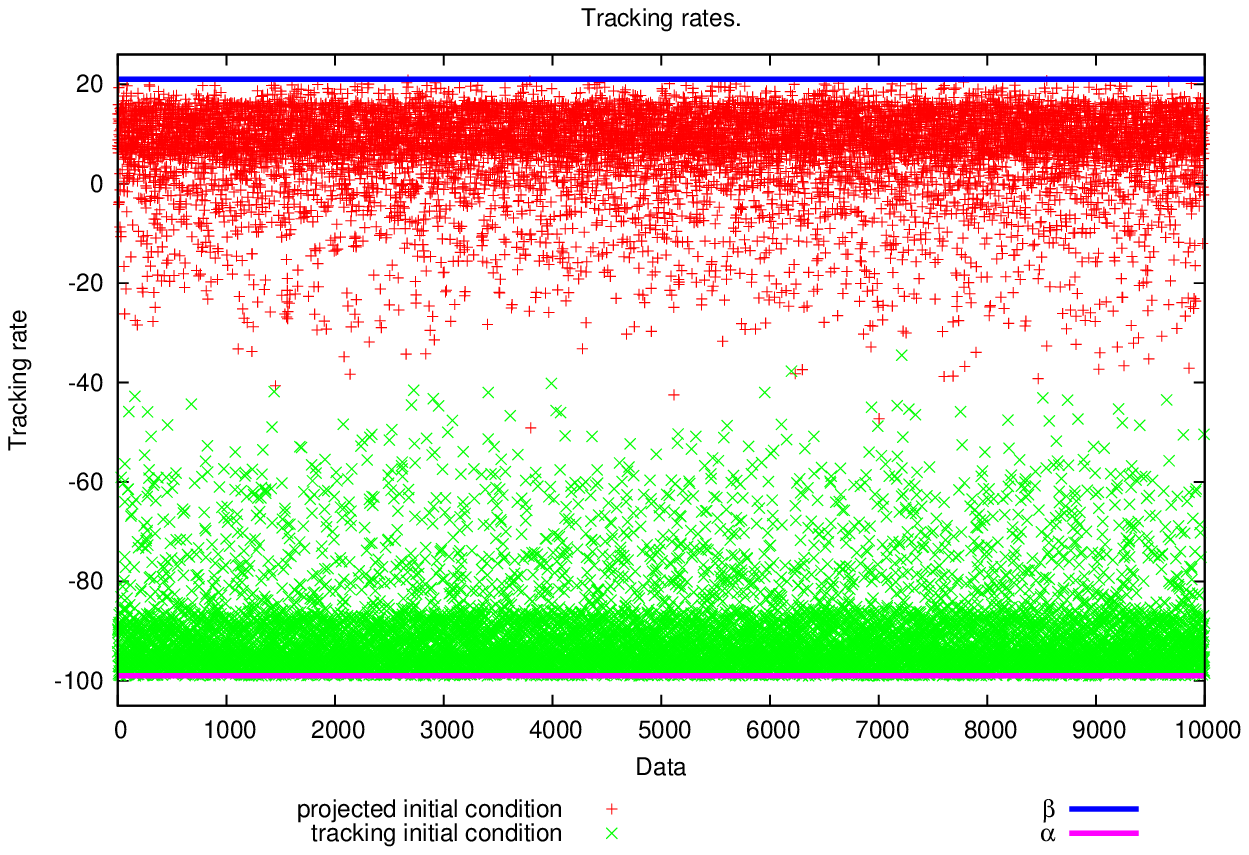} }
 \subfloat [$8$-mode KSE]{\includegraphics[width=2. in]{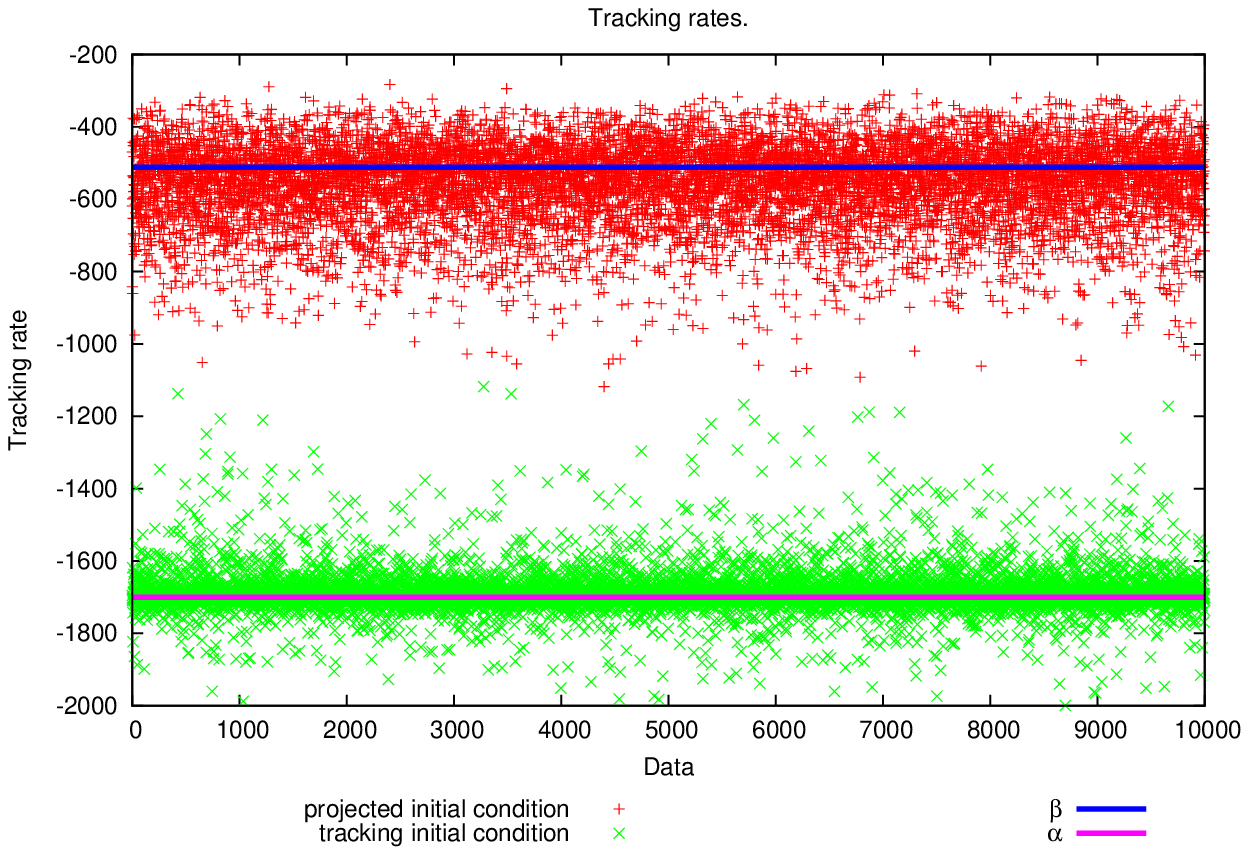} }
 \end{center}
\caption{Tracking rate for tracking initial condition ($\times$) is near the lower bound of the gap while the other ($+$) is near the upper bound of the gap.}
\label{Tracking Rate}
\end{figure}

%\newpage
%\bibliographystyle{plain}
%\bibliography{mybib}
\end{document}